\documentclass[10pt]{amsart}

\usepackage[T1]{fontenc}
\usepackage{lmodern}
\usepackage{amsmath,amssymb,amsthm,stmaryrd}
\usepackage{mathtools}
\usepackage[margin=0.95in]{geometry}
\usepackage{xcolor}
\usepackage{microtype}
\usepackage{enumitem}

\newcommand{\Gal}{\operatorname{Gal}}
\newcommand{\gr}{\mathrm{gr}}
\newcommand{\Et}{\mathrm{\acute{E}t}}

\DeclareMathOperator{\Frac}{Frac}
\DeclareMathOperator{\Pic}{Pic}
\DeclareMathOperator{\Cl}{Cl}
\DeclareMathOperator{\Spec}{Spec}

\newcommand{\mot}{\operatorname{mot}}
\newcommand{\Fil}{\operatorname{Fil}}

\usepackage[colorlinks=true,linkcolor=blue!55!black,citecolor=blue!55!black,urlcolor=blue!55!black]{hyperref}
\usepackage{aliascnt}
\usepackage{tikz}
\usetikzlibrary{arrows.meta,positioning}

\usepackage[nameinlink,capitalize,noabbrev]{cleveref}
\newtheorem{theorem}{Theorem}[section]
\newaliascnt{proposition}{theorem}
\newtheorem{proposition}[proposition]{Proposition}
\aliascntresetthe{proposition}
\newaliascnt{lemma}{theorem}
\newtheorem{lemma}[lemma]{Lemma}
\aliascntresetthe{lemma}
\newaliascnt{corollary}{theorem}
\newtheorem{corollary}[corollary]{Corollary}
\aliascntresetthe{corollary}
\theoremstyle{remark}
\newaliascnt{remark}{theorem}
\newtheorem{remark}[remark]{Remark}
\aliascntresetthe{remark}
\newaliascnt{exercise}{theorem}

\aliascntresetthe{exercise}
\crefname{theorem}{Theorem}{Theorems}
\crefname{proposition}{Proposition}{Propositions}
\crefname{lemma}{Lemma}{Lemmas}
\crefname{corollary}{Corollary}{Corollaries}
\crefname{remark}{Remark}{Remarks}
\crefname{exercise}{Exercise}{Exercises}

\title{Finite-coefficient Gersten injectivity fails in ramified mixed characteristic}

\hypersetup{pdftitle={Finite-coefficient Gersten injectivity fails in ramified mixed characteristic}}

\subjclass[2020]{19E08, 19E15, 14C35, 13H05}
\keywords{Gersten conjecture, algebraic K-theory, finite coefficients, mixed characteristic, ramification}

\author{Niels Feld}
\address{IRMAR
	\\
	University of Rennes
	\\
	263 Av. Général Leclerc
	\\
	35000 Rennes, France
}
\email{\href{mailto:niels.feld@univ-rennes.fr}{niels.feld@univ-rennes.fr}}
\urladdr{\url{https://nielsfeld.wixsite.com/website}}

\date{5 August 2026}

\begin{document}

\begin{abstract}
Let $V$ be a complete discrete valuation ring of mixed characteristic
$(0,3)$ in which $3$ is a uniformizer, and put
$A=V\llbracket x,y\rrbracket/(3+x^2-y^3)$.  We construct a nonzero class
$a\in K_2(A;\mathbf Z/3)$ whose restriction to the fraction field of $A$
is zero.  Thus Gersten injectivity for algebraic $K$-theory with
$\mathbf Z/3$-coefficients fails for a two-dimensional ramified regular local ring.  The
coefficient Bockstein of $a$ is zero, while the map
\(K_2(A)\to K_2(F)\) is injective.
We also indicate the expected analogous construction for every odd prime.
This counterexample does not contradict the integral Gersten conjecture but it rules out a naive reduction to finite coefficients.
\end{abstract}
\maketitle

\tableofcontents

\section{Introduction}

For a ring $R$, write $K(R)$ for nonconnective algebraic $K$-theory and
\(K(R;\mathbf Z/3):=K(R)\wedge\mathbf S/3\), where \(\mathbf S/3\) is the
Moore spectrum. 
For every odd prime \(p\), we have \(p\cdot\operatorname{id}_{\mathbf S/p}=0\) in the
stable homotopy category. 
Hence the groups \(K_n(R;\mathbf Z/p)\) are
naturally \(\mathbf F_p\)-modules; in particular, multiplication by
\(1/2\) on a \(\mathbf Z/3\)-coefficient group means multiplication by
\(2\). We have \(K_n(R;\mathbf Z/3)=\pi_nK(R;\mathbf Z/3)\), and the
coefficient sequence contains
\begin{equation}\label{eq:bockstein-sequence}
	K_2(R)\xrightarrow{3}K_2(R)\longrightarrow K_2(R;\mathbf Z/3)
	\xrightarrow{\partial_R}K_1(R)\xrightarrow{3}K_1(R).
\end{equation}

Here finite coefficients mean the spectrum-level Moore construction
\(K(R)\wedge\mathbf S/3\), applied term by term to the Gersten row.
This should not be confused with derived tensoring a fixed integral
Gersten complex with \(\mathbf Z/3\).


Our main result is the following explicit counterexample.

\begin{theorem}\label{thm:main}
	Let $V$ be a complete discrete valuation ring of mixed characteristic
	$(0,3)$ in which $3$ is a uniformizer, and let $k$ be its residue field\footnote{In other words, $V$ is a Cohen ring \cite[Tag~0327]{StacksProject}; for instance $V=\mathbf Z_3$, or $V=W(k)$ for a perfect field $k$ of
		characteristic 3.}.
	Set
	\begin{equation}\label{eq:A-definition}
		A:=V\llbracket x,y\rrbracket/(3+x^2-y^3),\qquad F:=\Frac(A).
	\end{equation}
	Then $A$ is a complete two-dimensional ramified regular local domain and
	there is a class
	\[
	0\ne a\in\ker\bigl(K_2(A;\mathbf Z/3)\longrightarrow
	K_2(F;\mathbf Z/3)\bigr).
	\]
	Moreover $\partial_A(a)=0$.
\end{theorem}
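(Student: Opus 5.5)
The plan is to produce $a$ as the reduction of an integral class. Because $\partial_A(a)=0$ is required, the natural source is the image of $K_2(A)/3\hookrightarrow K_2(A;\mathbf Z/3)$ in \eqref{eq:bockstein-sequence}. So I would look for $b\in K_2(A)$ with two properties:
\begin{enumerate}[label=(\roman*)]
\item $b\in 3K_2(F)$;
\item $b\notin 3K_2(A)$.
\end{enumerate}
Then $a:=\bar b$ is nonzero and satisfies $\partial_A(a)=0$ by exactness. Its image in $K_2(F;\mathbf Z/3)$ is the reduction of $b\in 3K_2(F)$, hence zero.

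\textbf{Step 0: regularity.} First I would check that $A$ is a complete two-dimensional regular local domain in which $3$ is not part of a regular system of parameters. Since $3=y^3-x^2\in\mathfrak m^2$, the maximal ideal is $\mathfrak m=(x,y)$. The ring $A$ is a quotient of the three-dimensional regular ring $V\llbracket x,y\rrbracket$ by one element, so it has dimension $2$ with two generators of $\mathfrak m$. Hence $A$ is regular and ramified.

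\textbf{Step 1: construct $b$.} The defining equation gives a Steinberg relation in $F$:
\[
1-\frac{y^3}{x^2}=\frac{-3}{x^2},\qquad\text{so}\qquad \Bigl\{\frac{y^3}{x^2},\frac{-3}{x^2}\Bigr\}=0 .
\]
Expanding this bilinearly expresses $3\{y,-3x^{-2}\}$ as a combination of symbols in $x$ and $-3$, up to $2$-torsion symbols such as $\{x,-1\}$. These $2$-torsion symbols die modulo $3$, or can be absorbed by multiplying by $2$, which is invertible mod $3$. The individual symbols are not in $K_2(A)$, because $x,y,3$ are not units. I would therefore rebalance the identity by units of $A$. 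Candidates are $1+x$, $1+y$, $y^3/(y^3-3)$ after suitable localisation, and the cube roots of unity lying in $A$ if present. I would also use Dennis--Stein symbols $\langle\alpha,\beta\rangle$ with $1+\alpha\beta\in A^\times$, which live in $K_2(A)$ itself.

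The aim is an element $b\in K_2(A)$, generated by Dennis--Stein symbols, whose image in $K_2(F)$ equals $3c$ for an explicit $c\in K_2(F)$ built from $\{y,-3x^{-2}\}$-type symbols. Integral injectivity $K_2(A)\to K_2(F)$, mentioned in the abstract, is not needed for this step. I expect it to be used only to see that $c\notin K_2(A)$ is the real phenomenon.

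\textbf{Step 2: show $b\notin 3K_2(A)$.} This is the main obstacle, and I would try to detect it by a specialization. The quotient $A/(x)=V\llbracket y\rrbracket/(y^3-3)=\mathcal O_L$ is a DVR with $L=\operatorname{Frac}(V)(3^{1/3})$, which is wildly ramified of degree $3$. There is a natural map $K_2(A)/3\to K_2(\mathcal O_L)/3\to K_2(L)/3$. By Merkurjev--Suslin, the last group maps to $\operatorname{Br}(L)[3]$ via the norm residue symbol, or I could use the Hilbert symbol of level $3$ if $\mu_3\subset L$ after adjoining.

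The point is that $x$ becomes $0$ there, so the factorization $b=3c$ does not specialize: $c$ involves $x$ in its denominators. I would compute the image of $b$ as an explicit symbol $\{u,v\}$ with $u,v\in\mathcal O_L^\times$, and show it is not a cube. The tool would be the explicit reciprocity law, or Bloch--Kato's filtration on $K_2(L)/3$ by $U^{(i)}$, where the graded pieces are computed in terms of $k$ and differentials.

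If the specialization to $x=0$ kills the class, my fallback is to specialize instead to $A/(y-\lambda x)$ or to $A/(y)=V\llbracket x\rrbracket/(x^2+3)$. The latter is a DVR containing $\sqrt{-3}$, hence $\mu_3$, where the level-$3$ Hilbert symbol is available in closed form.
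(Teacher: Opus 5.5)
Your reduction is correct. Since $A^\times[3]=0$, it is in fact equivalent to the theorem. But all of the content lies in producing $b$ and proving $b\notin 3K_2(A)$, and neither is achieved.

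\textbf{Step~1.} It never yields an explicit $b$. The Steinberg relation only gives elements such as $\{x,-3\}=3(\{x,-3\}-\{y,-3x^{-2}\})\in 3K_2(F)$. These are ramified along $x=0$, so they do not lie in $K_2(A)$.

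\textbf{Step~2.} This is the genuine gap: it cannot work even once some $b$ is found, because every specialization you propose kills the entire kernel of $K_2(A;\mathbf Z/3)\to K_2(F;\mathbf Z/3)$. Let $g\in A$ be prime, $\mathfrak p=(g)$, $E=\Frac(A/g)$, and let $a$ lie in that kernel.
\begin{enumerate}[label=(\alph*)]
\item The restriction of $a$ to the discrete valuation ring $A_{\mathfrak p}$ still dies over $F$.
\item By the localization sequence it therefore equals $\iota_*\gamma$ for some $\gamma\in K_2(E;\mathbf Z/3)$, with $\iota\colon\Spec E\to\Spec A_{\mathfrak p}$.
\item The Koszul resolution of $E$ by $g$ gives $E\otimes^{\mathbf L}_{A_{\mathfrak p}}E\simeq E\oplus E[1]$. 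Hence $\iota^*\iota_*\gamma=\gamma-\gamma=0$, and so $a|_E=0$.
\end{enumerate}
Your choices $g=x$ (so $E=L$), $g=y$ (so $E=\Frac(O)$) and $g=y-\lambda x$ all fall under this. So every norm residue symbol, level-$3$ Hilbert symbol or Bloch--Kato graded piece you compute on $E$ vanishes on $\bar b$. These $g$ are also regular parameters, so $A/g$ is a discrete valuation ring, and Gillet's theorem even gives $\bar b|_{A/g}=0$ in $K_2(A/g;\mathbf Z/3)$. For $V=\mathbf Z_3$ your first target is zero outright: $K_2(L)/3=\mu(L)/3=0$ because $\zeta\notin L$. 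So the heuristic that ``$b=3c$ does not specialize because $c$ has $x$ in its denominators'' is false: $\bar b$ restricts to $0$ at every height-one point.

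\textbf{The missing idea} is a detector that is not a restriction to a point.
\begin{enumerate}[label=(\alph*)]
\item The paper takes $a=i_*\beta$, where $i$ is the inclusion of the divisor $y=0$, $A/yA\simeq O=V[\zeta]$, and $\beta\in K_2(O;\mathbf Z/3)$ satisfies $\partial_O\beta=[\zeta]$ and $\sigma^*\beta=-\beta$.
\item It pulls back along the flat map $A\to R=(A\otimes_VO)[1/3]$. There $y=0$ splits into the two conjugate points cut out by $P=(u,y)$ and $Q=(v,y)$ on the $A_2$ surface $uv=y^3$, and the class of $P|_T$ generates $\Pic(T)\simeq\mathbf Z/3$.
\item By the projection formula the pullback is $\beta|_T\cdot([\mathcal L]-1)$. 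The anti-invariance of $\beta$ is essential: an invariant $\beta$ would give $\beta|_T\cdot([R/PR]+[R/QR])=\beta|_T\cdot[R/yR]=0$.
\item This product is nonzero because its weight-two leading term in the motivic filtration maps to $\zeta\smile\kappa(\mathcal L)\neq0$ in $H^2_{\Et}(T,\mu_3^{\otimes2})$.
\end{enumerate}
The obstruction thus lives in the Picard group of a one-dimensional scheme, which no passage to a residue field can detect. Any symbol-theoretic proof along your lines would need an invariant of that global kind.
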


Consequently the ordinary augmented finite-coefficient Gersten row is not
exact at its first term.  
This does not contradict the equicharacteristic integral results of
Quillen and Panin, Gillet's finite-coefficient theorem for discrete
valuation rings, or the smooth mixed-characteristic theorem of
Gillet-Levine
\cite{Quillen73,Panin03,Gillet86,GilletLevine87}.

 The ring in \eqref{eq:A-definition} is ramified: its maximal ideal is
 \((x,y)\) and
 \(
 3=y^3-x^2\in(x,y)^2.
 \)
 Its special fibre is the cusp
 \(
 A/3A\simeq k\llbracket x,y\rrbracket/(x^2-y^3),
 \)
 so \(A\) is regular but not smooth over \(V\).  Recent positive results
 for smooth or ind-smooth
 algebras over valuation rings likewise do not contain this example
\cite{BouisKunduBL,Kundu24SmoothValuation}.

There is one literature consequence worth making precise.  After
introducing an arbitrary cohomology theory with supports,
\cite[Conj.~1.1]{LuedersPAdic24} asserts that its Cousin complex resolves
the associated Zariski sheaf on every regular scheme.  In that literal
unrestricted form the conjecture is false: take the theory with supports
$H_Z^q(X):=K_{-q}^Z(X;\mathbf Z/3)$ and apply \cref{thm:main} to the local
scheme $\Spec(A)$.  The actual theorem of that paper concerns $p$-adic
\'etale Tate twists on schemes smooth over a henselian discrete valuation
ring and is unaffected \cite[Thm.~1.2]{LuedersPAdic24}.  Similarly, the
explicitly ``imprecise'' broad formulation in
\cite[Conj.~1.1]{Kundu24SmoothValuation} cannot be read as including mod-$p$ algebraic $K$-theory in residue characteristic $p$ on all
regular rings; the proved integral statements there remain unchanged.

The proof is short once two inputs are isolated.  The class itself is
elementary: the prime divisor $y=0$ of $\Spec(A)$ has coordinate ring
$A/yA\simeq O:=V[\zeta]$, and $a$ is the pushforward along
$\Spec(O)\hookrightarrow\Spec(A)$ of a mod-$3$ class of $O$.  
Its generic restriction vanishes because it is supported on a divisor.
Nonvanishing is proved after the flat base change
$A\to B[1/3]$, where $B$ is the normal surface $uv=y^3$ obtained from $A$ by
adjoining $\zeta$. After this base change, the divisor splits into two conjugate points and the class
becomes the product of a mod-$3$ class with a line bundle of order three in
$\Pic(B[1/3]$. This product is detected in weight two of the motivic filtration.
The absence of nontrivial cube roots of unity in $A$ makes the coefficient
Bockstein of $a$ vanish.

\section{Motivic detection of Bott products with line-bundle classes}
\label{sec:motivic-detection}

We use the following consequence of the multiplicative motivic filtration
on algebraic \(K\)-theory.  In the equicharacteristic case relevant below,
the filtration and its motivic graded pieces were constructed by
Elmanto-Morrow
\cite[Thm.~1.1]{ElmantoMorrowEquicharMotivic};
Bouis extended this construction beyond equicharacteristic
\cite[Thm.~C]{BouisMixedCharacteristicMotivicCohomology}.
The filtered first Chern class and the ind-smooth comparison used below
are supplied by
\cite{BouisWeibelVanishingPBF,BouisKunduBL}.

\begin{lemma}
	\label{lem:filtered-bott-picard-compatibility}
	Let \(T\) be a connected regular affine noetherian scheme of dimension
	one, ind-smooth over a field of characteristic zero, and suppose that
	\(3\) is invertible on \(T\). Set
	\(F^q:=\Fil^q_{\mot}K(T)\) and
	\(F^q_3:= \Fil^q_{\mot}K(T;\mathbf Z/3) =\operatorname{cofib}(3\colon F^q\to F^q)\). Denote by
	\(i_q\colon F^q\to F^0\simeq K(T)\) and
	\(i_{q,3}\colon F^q_3\to F^0_3\simeq K(T;\mathbf Z/3)\)
	the structure maps of the two decreasing filtrations, and by
	\(e_q\colon F^q\to\gr^q_{\mot}K(T)\) and
	\(e_{q,3}\colon F^q_3\to\gr^q_{\mot}K(T;\mathbf Z/3)\)
	the quotient maps. Then the following assertions hold.
	\begin{enumerate}[label=\textup{(\roman*)}]
		\item The cofiber sequences defining the spectra \(F^q_3\) assemble
		into a cofiber sequence of filtered spectra
		\begin{equation}\label{eq:filtered-coefficient-triangle}
			\Fil^\bullet_{\mot}K(T)\xrightarrow{3}
			\Fil^\bullet_{\mot}K(T)\longrightarrow
			\Fil^\bullet_{\mot}K(T;\mathbf Z/3)
			\xrightarrow{\widetilde\partial}
			\Sigma\Fil^\bullet_{\mot}K(T).
		\end{equation}
		For every \(q\geq0\), applying \(\gr^q\) to
		\eqref{eq:filtered-coefficient-triangle} gives the motivic
		coefficient triangle
		\[
		\mathbf Z(q)_{\mot}(T)[2q]\xrightarrow{3}
		\mathbf Z(q)_{\mot}(T)[2q]\to
		\mathbf Z/3(q)_{\mot}(T)[2q]
		\xrightarrow{\delta_q[2q]}\mathbf Z(q)_{\mot}(T)[2q+1].
		\]
		Thus, on homotopy groups,
		\((e_q)_*\widetilde\partial_*
		=(\delta_q[2q])_*(e_{q,3})_*\).
		
		\item The map
		\((i_{1,3})_*\colon\pi_2F^1_3\to K_2(T;\mathbf Z/3)\)
		is an isomorphism. If \(\zeta\in\mu_3(T)\) and
		\(\beta\in K_2(T;\mathbf Z/3)\) satisfy
		\(\partial_T(\beta)=[\zeta]\), define
		\(\widetilde\beta:=(i_{1,3})_*^{-1}(\beta)\). Under the canonical
		identification
		\(
		\pi_2\gr^1_{\mot}K(T;\mathbf Z/3)
		=H^0_{\mot}(T,\mathbf Z/3(1))=\mu_3(T)
		\), one has \((e_{1,3})_*(\widetilde\beta)=\zeta\).
		
		\item The homomorphism \((i_1)_*\colon\pi_0F^1\to K_0(T)\)
		is injective with image \(\ker(\operatorname{rk}\colon K_0(T)\to
		\mathbf Z)\). For a line bundle \(\mathcal L\) on \(T\), let
		\(\widetilde\lambda\in\pi_0F^1\) be the unique class satisfying
		\((i_1)_*(\widetilde\lambda)=[\mathcal L]-1\). Then
		\((e_1)_*(\widetilde\lambda)=c_1^{\mot}(\mathcal L)\) in
		\(
		\pi_0\gr^1_{\mot}K(T)
		=H^2_{\mot}(T,\mathbf Z(1))
		\).
		
		\item The motivic-to-\'etale comparison maps
		\(
		\operatorname{comp}_q\colon
		\mathbf Z/3(q)_{\mot}(T)\to
		R\Gamma_{\Et}(T,\mu_3^{\otimes q})
		\)
		are multiplicative. Moreover,
		\(H^0(\operatorname{comp}_1)(\zeta)=\zeta\) and
		\(H^2(\operatorname{comp}_1)
		(c_1^{\mot}(\mathcal L)\bmod3)=\kappa(\mathcal L)\), where
		\(\kappa(\mathcal L)\in H^2_{\Et}(T,\mu_3)\) is the image of
		\([\mathcal L]\) under the boundary map of the Kummer sequence.
	\end{enumerate}
\end{lemma}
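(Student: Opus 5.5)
I will prove the four assertions separately, reducing each to structural properties of the motivic filtration on ind-smooth schemes over a characteristic-zero field. There it agrees with the classical (Friedlander--Suslin/Levine) motivic filtration and with the constructions of Elmanto--Morrow and Bouis. The common input is the ind-smooth comparison of \cite{BouisKunduBL}: for such \(T\), \(\mathbf Z(q)_{\mot}(T)\) is Bloch's cycle complex, so \(\mathbf Z(q)_{\mot}(T)\) vanishes in cohomological degrees \(>q+\dim T\) and \(>2q\) (up to the usual bounds), and \(\mathbf Z(0)=\mathbf Z\), \(\mathbf Z(1)\simeq\mathbb G_m[-1]\).

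For (i), I take the cofiber of \(3\) levelwise in the stable \(\infty\)-category of filtered spectra. Cofibers are computed pointwise, and \(F^\bullet\to F^\bullet\) is a map of filtered spectra, so \eqref{eq:filtered-coefficient-triangle} is a cofiber sequence with the given terms. Since \(\gr^q\) is an exact functor (a cofiber of \(F^{q+1}\to F^q\)), it carries this sequence to a cofiber sequence. Moreover \(\gr^q(\mathrm{cofib}\,3)=\mathrm{cofib}(3\colon\gr^q\to\gr^q)\), which is by definition \(\mathbf Z/3(q)_{\mot}[2q]\), and the connecting map is \(\delta_q[2q]\). The identity \((e_q)_*\widetilde\partial_*=(\delta_q[2q])_*(e_{q,3})_*\) is then naturality of the connecting map along \(e\colon F^q\to\gr^q\).

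For (ii) and (iii), I use the exact sequences on homotopy for \(F^{q+1}\to F^q\to\gr^q\). We have \(\pi_n\gr^0=H^{-n}(T,\mathbf Z)\), so \(\pi_0\gr^0=\mathbf Z\) (rank) and \(\pi_n\gr^0=0\) for \(n\ne0\), and similarly mod \(3\). Hence \(\pi_2F^1_3\to\pi_2F^0_3\) is an isomorphism, since \(\pi_2,\pi_3\) of \(\gr^0_3\) vanish, and \(\pi_0F^1\to K_0(T)\) is injective with image the rank-zero kernel, since \(\pi_1\gr^0=0\). For the identification \((e_{1,3})_*\widetilde\beta=\zeta\), I apply (i) with \(q=1\). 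Here \(\pi_2\gr^1_3=H^0(\mathbf Z/3(1))=\mu_3(T)\), and the Bockstein \(\delta_1\) maps it injectively into \(H^1(\mathbf Z(1))=\mathcal O(T)^\times\) via the Kummer inclusion \(\mu_3\subset\mathcal O^\times\). Since \(\pi_1F^1\to K_1(T)\) is an isomorphism (again \(\pi_1,\pi_2\gr^0=0\)) and \(\pi_1\gr^1=\mathcal O(T)^\times\), we get \((e_1)_*\) identifying \(\pi_1F^1\) with units modulo \(\pi_1F^2\). For \(\dim T=1\), the group \(\pi_1\gr^2=H^3(\mathbf Z(2))\) vanishes, so the map is \(\det\colon K_1(T)\to\mathcal O(T)^\times\). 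The compatibility then gives \(\delta_1((e_{1,3})_*\widetilde\beta)=\det\partial_T\beta=\zeta\), and injectivity of \(\delta_1\) on \(\mu_3\) yields the claim. The first Chern class statement in (iii) is the filtered first Chern class compatibility of \cite{BouisWeibelVanishingPBF}: the class \([\mathcal L]-1\) lifts to \(F^1\) with image \(c_1(\mathcal L)\in H^2(\mathbf Z(1))=\Pic(T)\). I will cite it, checking only that normalization signs agree; a sign would not affect later use up to the \(\mathbf F_3\)-unit \(-1\).

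For (iv), I quote multiplicativity of the cycle-class/Beilinson--Lichtenbaum comparison map from \cite{BouisKunduBL}; for ind-smooth \(T\) it is the classical Geisser--Levine/Bloch construction, and it is multiplicative. Weight one is determined by \(\mathbf Z/3(1)_{\mot}\simeq\mathrm{cofib}(3\colon\mathbb G_m[-1]\to\mathbb G_m[-1])\) mapping to \(R\Gamma_{\Et}(\mu_3)\) through the Kummer sequence. Reading off \(H^0\) gives \(\zeta\mapsto\zeta\), and reading off \(H^2\) gives the Kummer boundary of \([\mathcal L]\). The main obstacle is this normalization step: we must ensure that the identification of the weight-one comparison with the Kummer map is the one built into the cited constructions, compatibly with the \(c_1\) normalization of (iii). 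I would handle it by checking both identifications on the universal case \(\mathbb G_m\) and \(B\mathbb G_m\) (or \(\mathbb P^\infty\)) via the projective bundle formula.
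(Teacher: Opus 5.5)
\textbf{Gap in (ii).} Your treatment of (i), of the isomorphism and injectivity statements in (ii) and (iii), and of (iv) by citation matches the paper. The gap is the identification $(e_{1,3})_*(\widetilde\beta)=\zeta$. Using (i) and injectivity of $\delta_1$ on $\mu_3(T)$, you correctly reduce it to computing $\epsilon_1([\zeta])$. Here $\epsilon_1:=(e_1)_*\circ(i_1)_*^{-1}\colon K_1(T)\to H^1_{\mot}(T,\mathbf Z(1))=\mathcal O(T)^\times$ is the weight-one edge map. Your justification that $\epsilon_1=\det$ does not work, for two reasons.
\begin{enumerate}[label=\textup{(\alph*)}]
\item The vanishing you claim is false in general. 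The bound you quote only kills degrees $>\min(2q,q+\dim T)=3$, so it says nothing about $\pi_1\gr^2_{\mot}K(T)=H^3_{\mot}(T,\mathbf Z(2))$. By the coniveau spectral sequence, this group is the cokernel of the tame symbol $K_2^M(k(T))\to\bigoplus_x k(x)^\times$. That is $H^1(T,\mathcal K_2)$, which for affine Dedekind $T$ is $SK_1(T)$ and need not vanish.
\item More fundamentally, no vanishing or exactness statement can tell you \emph{which} homomorphism $K_1(T)\to\mathcal O(T)^\times$ the filtration produces. Restricted to units, $\epsilon_1$ is a priori only a natural endomorphism of $\mathbf G_m$, hence of the form $z\mapsto z^n$. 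Nothing in your argument rules out $3\mid n$, in which case $(e_{1,3})_*(\widetilde\beta)=1$ and (ii) fails.
\end{enumerate}

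\textbf{What is missing.} You need a normalization property of the specific construction. The composite $\mathcal O(T)^\times\to K_1(T)\xrightarrow{\epsilon_1}H^1_{\mot}(T,\mathbf Z(1))$ must equal the map on $H^1$ induced by $c_1^{\mot}\colon R\Gamma(T,\mathbf G_m)[-1]\to\mathbf Z(1)_{\mot}(T)$. That map is the identity under the classical comparison $\mathbf Z(1)_{\mot}(T)\simeq R\Gamma_{\mathrm{Zar}}(T,\mathbf G_m)[-1]$. The paper takes this from the compatibility of the filtered orientation with $c_1^{\mot}$ (Bouis--Weibel, Rem.~4.4; Elmanto--Morrow, \S4.1). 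Only the value $\epsilon_1([\zeta])=\zeta$ is used, so $SK_1(T)$ never enters. Replace your ``$\epsilon_1=\det$'' step by this input, which is of the same nature as the citation you already use for (iii).

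\textbf{Minor point on (iii).} The filtered projective-bundle map of Bouis--Weibel lives on $\mathbf P^1_{k_0}$. To transport it to $T$, you pull back along a map $T\to\mathbf P^1_{k_0}$ classifying $\mathcal L$; this exists because invertible modules over a Dedekind ring are generated by two elements. What this naturally lifts is $1-[\mathcal L^\vee]$, not $[\mathcal L]-1$. The paper multiplies by $[\mathcal L]\in\pi_0F^0$ and uses injectivity of $(i_1)_*$ to obtain exactly $\widetilde\lambda$. So the stated equality holds on the nose, and no sign has to be absorbed.
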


\begin{proof}
	By \cite[Defs.~3.18 and~3.20, Prop.~4.46]
	{BouisMixedCharacteristicMotivicCohomology}, the spectra \(F^q\)
	form a functorial multiplicative decreasing \(\mathbf N\)-indexed
	filtration of \(K(T)\), with \(F^0\simeq K(T)\) and
	\(
	\gr^q_{\mot}K(T)\simeq\mathbf Z(q)_{\mot}(T)[2q]
	\). Evaluation at a fixed filtration degree and the functor \(\gr^q\)
	are exact functors on filtered spectra. Hence the levelwise cofibers
	\(F^q_3\) form the cofiber sequence
	\eqref{eq:filtered-coefficient-triangle}, and
	\begin{equation}\label{eq:bott-picard-coefficient-graded}
		\gr^q_{\mot}K(T;\mathbf Z/3)
		\simeq\operatorname{cofib}
		\bigl(3\colon\gr^q_{\mot}K(T)\to
		\gr^q_{\mot}K(T)\bigr)
		\simeq\mathbf Z/3(q)_{\mot}(T)[2q].
	\end{equation}
	Exactness also identifies \(\gr^q(\widetilde\partial)\) with
	\(\delta_q[2q]\), including its commutative square with the quotient
	maps. The multiplication gives maps
	\(F^r_3\wedge F^s\to F^{r+s}_3\). Under
	\eqref{eq:bott-picard-coefficient-graded}, their associated-graded maps
	are
	\[
	\mathbf Z/3(r)_{\mot}(T)[2r]\otimes^{\mathbf L}
	\mathbf Z(s)_{\mot}(T)[2s]\longrightarrow
	\mathbf Z/3(r+s)_{\mot}(T)[2r+2s],
	\]
	induced by the multiplication of the motivic complexes. This proves
	\textup{(i)}.
	
	Choose a field \(k_0\) of characteristic zero over which \(T\) is
	ind-smooth. The classical-motivic comparison is an equivalence on \(T\)
	by \cite[Thm.~6.1]{BouisKunduBL}. Thus
	\(
	\mathbf Z(0)_{\mot}(T)\simeq
	R\Gamma_{\mathrm{Zar}}(T,\mathbf Z)
	\) and
	\(
	\mathbf Z(1)_{\mot}(T)\simeq
	R\Gamma_{\mathrm{Zar}}(T,\mathbf G_m)[-1]
	\); see also \cite[Examples~3.4 and~3.5]
	{BouisMixedCharacteristicMotivicCohomology}.
	
	We record the weight-zero consequence in the degrees that will be used.
	Since \(T\) is connected and regular, it is integral. The
	constant Zariski sheaves \(\mathbf Z\) and \(\mathbf Z/3\) on \(T\)
	are consequently flasque, and their cohomology is concentrated in degree
	zero. 
	Consequently
	\(
	\mathbf Z(0)_{\mot}(T)\simeq\mathbf Z,
	\) and \(
	\mathbf Z/3(0)_{\mot}(T)\simeq\mathbf Z/3
	\)
	concentrated in degree zero. It follows from
	\eqref{eq:bott-picard-coefficient-graded} that, for \(j>0\), \(
	\pi_j\gr^0_{\mot}K(T;\mathbf Z/3)=0
	.
	\)
	Under the classical-motivic comparison, the map
	\(
	\pi_0F^0=K_0(T)\to\pi_0\gr^0_{\mot}K(T)=\mathbf Z
	\)
	is the usual
	rank homomorphism. The fibre sequence \(F^1_3\to F^0_3\to
	\gr^0_{\mot}K(T;\mathbf Z/3)\) gives the exact segment
	\[
	0=\pi_3\gr^0_{\mot}K(T;\mathbf Z/3)\to
	\pi_2F^1_3\xrightarrow{(i_{1,3})_*}K_2(T;\mathbf Z/3)\to
	\pi_2\gr^0_{\mot}K(T;\mathbf Z/3)=0.
	\] 
	Thus \((i_{1,3})_*\) is an isomorphism.
	
	The integral weight-zero fibre sequence similarly gives
	\((i_1)_*\colon\pi_1F^1\xrightarrow{\sim}K_1(T)\), because
	\(\pi_2\gr^0_{\mot}K(T)=\pi_1\gr^0_{\mot}K(T)=0\). Define the
	weight-one edge homomorphism
	\(\epsilon_1\colon K_1(T)\to H^1_{\mot}(T,\mathbf Z(1))\) as
	\((e_1)_*\circ(i_1)_*^{-1}\). The compatibility between the filtered
	orientation and the motivic first Chern class, used in
	\cite[Rem.~4.4]{BouisWeibelVanishingPBF} and made explicit in
	\cite[\S4.1]{ElmantoMorrowEquicharMotivic}, identifies the restriction of
	\(\epsilon_1\) along
	\(\Gamma(T,\mathcal O_T)^\times\to K_1(T)\) with the map on \(H^1\)
	induced by
	\[
	c_1^{\mot}\colon
	R\Gamma_{\mathrm{Nis}}(T,\mathbf G_m)[-1]\longrightarrow
	\mathbf Z(1)_{\mot}(T)
	\]
	of \cite[Def.~4.1 and Rem.~4.2]{BouisWeibelVanishingPBF}. Under the
	classical-motivic comparison on \(T\), this map on \(H^1\) is the
	identity of \(\Gamma(T,\mathcal O_T)^\times\). In particular,
	\(\epsilon_1([\zeta])=\zeta\).
	
	Naturality of the boundary in
	\eqref{eq:filtered-coefficient-triangle} gives
	\((i_1)_*\widetilde\partial_*(\widetilde\beta)
	=\partial_T(\beta)=[\zeta]\). Therefore
	\(\widetilde\partial_*(\widetilde\beta)\) is the unique element of
	\(\pi_1F^1\) mapped to \([\zeta]\) by \((i_1)_*\).
	
	Put \(\bar\beta:=(e_{1,3})_*(\widetilde\beta)\). Part \textup{(i)}
	and the preceding equality give
	\(\delta_1(\bar\beta)=\zeta\) in
	\(H^1_{\mot}(T,\mathbf Z(1))=\Gamma(T,\mathcal O_T)^\times\).
	Under \(\mathbf Z(1)_{\mot}(T)\simeq
	R\Gamma_{\mathrm{Zar}}(T,\mathbf G_m)[-1]\), the long exact sequence
	of the weight-one coefficient triangle identifies
	\(H^0_{\mot}(T,\mathbf Z/3(1))\) with \(\mu_3(T)\), and identifies
	\(\delta_1\) with the inclusion
	\(\mu_3(T)\hookrightarrow\Gamma(T,\mathcal O_T)^\times\).
	Consequently \(\bar\beta=\zeta\), proving \textup{(ii)}.
	
	The integral weight-zero fibre sequence also gives the exact sequence
	\(
	0=\pi_1\gr^0_{\mot}K(T)\to\pi_0F^1
	\xrightarrow{(i_1)_*}K_0(T)\xrightarrow{\operatorname{rk}}\mathbf Z
	\). This proves the first assertion of \textup{(iii)} and defines
	\(\widetilde\lambda\) without ambiguity.
	
	It remains to calculate \((e_1)_*(\widetilde\lambda)\). Since \(T\) is
	affine, integral, noetherian, regular, and one-dimensional, its
	coordinate ring is a Dedekind domain. Every invertible module over a
	Dedekind domain is generated by two elements. Choosing two generators of
	\(\mathcal L\) gives a quotient \(\mathcal O_T^{\oplus2}\twoheadrightarrow
	\mathcal L\), hence a morphism \(f\colon T\to\mathbf P^1_{k_0}\) such that
	\(f^*\mathcal O_{\mathbf P^1_{k_0}}(1)\simeq\mathcal L\).
	
	We now apply \cite[Constr.~4.3 and Rem.~4.4]
	{BouisWeibelVanishingPBF} with \(X=\Spec({k_0})\) and \(E=K\). If
	\(\pi\colon\mathbf P^1_{k_0}\to\Spec({k_0})\) is the projection, the second
	summand of Bouis's filtered projective-bundle map is a morphism
	\(
	u\colon\Fil^{\bullet-1}_{\mot}K({k_0})\to
	\Fil^\bullet_{\mot}K(\mathbf P^1_{k_0})
	\). At filtration degree one it gives
	\(u_1\colon\Fil^0_{\mot}K({k_0})\to
	\Fil^1_{\mot}K(\mathbf P^1_{k_0})\). For \(K\)-theory, the line-bundle
	operator in Construction~4.3 is induced by the exact autoequivalence
	\(-\otimes\mathcal O(-1)\). Hence the composite
	\(
	\pi_0\Fil^0_{\mot}K({k_0})\xrightarrow{(u_1)_*}
	\pi_0\Fil^1_{\mot}K(\mathbf P^1_{k_0})\to K_0(\mathbf P^1_{k_0})
	\)
	sends \(1\) to \(1-[\mathcal O(-1)]\).
	
	On associated graded pieces, \cite[Rem.~4.4]
	{BouisWeibelVanishingPBF} identifies \(\gr^1(u)\), after the shifts
	\(\gr^j_{\mot}K\simeq\mathbf Z(j)_{\mot}[2j]\), with the second map in
	\cite[(4.2.1)]{BouisWeibelVanishingPBF}. By
	\cite[Def.~4.1 and Rem.~4.2]{BouisWeibelVanishingPBF}, this map is
	\(
	c_1^{\mot}(\mathcal O(1))\pi^*\colon
	\mathbf Z(0)_{\mot}({k_0})\to
	\mathbf Z(1)_{\mot}(\mathbf P^1_{k_0})[2]
	\). Therefore, if
	\(a:=(u_1)_*(1)\in
	\pi_0\Fil^1_{\mot}K(\mathbf P^1_{k_0})\), then its image under
	\(\pi_0\Fil^1_{\mot}K(\mathbf P^1_{k_0})\to
	\pi_0\gr^1_{\mot}K(\mathbf P^1_{k_0})\) is
	\(c_1^{\mot}(\mathcal O(1))\).
	
	Functoriality of the motivic filtration gives
	\(a_{\mathcal L}:=f^*(a)\in\pi_0F^1\). The two calculations above and
	naturality of \(c_1^{\mot}\) give
	\((i_1)_*(a_{\mathcal L})=1-[\mathcal L^\vee]\) and
	\((e_1)_*(a_{\mathcal L})=c_1^{\mot}(\mathcal L)\). Let
	\(b_{\mathcal L}:=[\mathcal L]\cdot a_{\mathcal L}\in\pi_0F^1\),
	where the product is the map
	\(\pi_0F^0\otimes\pi_0F^1\to\pi_0F^1\) induced by the multiplicative
	motivic filtration. Then
	\((i_1)_*(b_{\mathcal L})=[\mathcal L](1-[\mathcal L^\vee])
	=[\mathcal L]-1\), so injectivity of \((i_1)_*\) gives
	\(b_{\mathcal L}=\widetilde\lambda\). The image of \([\mathcal L]\)
	in \(\pi_0\gr^0_{\mot}K(T)=H^0_{\mot}(T,\mathbf Z(0))=\mathbf Z\)
	is its rank, namely \(1\). Multiplicativity on associated graded pieces
	therefore gives \((e_1)_*(\widetilde\lambda)
	=c_1^{\mot}(\mathcal L)\). This proves \textup{(iii)}.
	
	Finally, in the equicharacteristic situation at hand the
	Beilinson-Lichtenbaum map is a map of graded
	\(\mathbf E_\infty\)-algebras and is compatible with the first Chern
	class; see
	\cite[Thm.~1.1 and Remark 4.42]{ElmantoMorrowEquicharMotivic} and
	\cite[Thm.~7.16]{BachmannElmantoMorrowCdhMotivic}. In weight one,
	\(H^0(\operatorname{comp}_1)\colon\mu_3(T)\to
	H^0_{\Et}(T,\mu_3)=\mu_3(T)\) is the identity, while
	\(H^2(\operatorname{comp}_1)\) sends the motivic first Chern class
	modulo \(3\) to the boundary class of \(\mathcal L\) for
	\(1\to\mu_3\to\mathbf G_m\xrightarrow{3}\mathbf G_m\to1\).
	These are precisely the two identities in \textup{(iv)}, and
	multiplicativity gives
	\(
	\operatorname{comp}_2
	(\zeta\smile(c_1^{\mot}(\mathcal L)\bmod3))
	=\zeta\smile\kappa(\mathcal L)
	\). This proves \textup{(iv)}.
\end{proof}

\begin{lemma}
	\label{lem:bott-picard}
	Let \(T\) be a connected regular affine noetherian scheme of dimension
	one, ind-smooth over a field of characteristic zero, and suppose that
	\(3\) is invertible on \(T\). Suppose that
	\(\Gamma(T,\mathcal O_T)\) contains a primitive cube root of unity
	\(\zeta\). Let \(\mathcal L\) be a line bundle whose class is nonzero in
	\(\Pic(T)/3\), and let \(\beta\in K_2(T;\mathbf Z/3)\) satisfy
	\(\partial_T(\beta)=[\zeta]\in K_1(T)[3]\). Then
	\(\beta([\mathcal L]-1)\ne0\) in \(K_2(T;\mathbf Z/3)\).
\end{lemma}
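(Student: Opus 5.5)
We lift the product into the filtered world, read off its image in weight two, and show that this image is nonzero after passing to étale cohomology. By \cref{lem:filtered-bott-picard-compatibility}(ii) the class \(\beta\) lifts uniquely to \(\widetilde\beta\in\pi_2F^1_3\), with \((e_{1,3})_*(\widetilde\beta)=\zeta\). By part (iii), the class \([\mathcal L]-1\) lifts to \(\widetilde\lambda\in\pi_0F^1\), with \((e_1)_*(\widetilde\lambda)=c_1^{\mot}(\mathcal L)\). The multiplication \(F^1_3\wedge F^1\to F^2_3\) gives \(\widetilde\beta\cdot\widetilde\lambda\in\pi_2F^2_3\). Its image under \(i_{2,3}\) is \(\beta([\mathcal L]-1)\), because the structure maps are compatible with products. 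Also by part (i), its image in
\[
\pi_2\gr^2_{\mot}K(T;\mathbf Z/3)=H^2_{\mot}(T,\mathbf Z/3(2))
\]
is the cup product \(\zeta\smile(c_1^{\mot}(\mathcal L)\bmod 3)\).

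Next we show this weight-two class is nonzero. Apply \(\operatorname{comp}_2\) and use part (iv): the image is \(\zeta\smile\kappa(\mathcal L)\in H^2_{\Et}(T,\mu_3^{\otimes2})\). Since \(\zeta\in\Gamma(T,\mathcal O_T)\), we have \(\mu_3\simeq\mathbf Z/3\) on \(T\), and cup product with \(\zeta\) is the isomorphism \(H^2_{\Et}(T,\mu_3)\to H^2_{\Et}(T,\mu_3^{\otimes2})\). The Kummer sequence injects \(\Pic(T)/3\) into \(H^2_{\Et}(T,\mu_3)\), so \(\kappa(\mathcal L)\ne0\). Hence \(\zeta\smile(c_1^{\mot}(\mathcal L)\bmod3)\ne0\).

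The main obstacle is that a nonzero image in \(\gr^2\) does not by itself show that \(i_{2,3}(\widetilde\beta\widetilde\lambda)\ne0\). The filtration step \(F^2_3\to F^1_3\) could kill the class. We need to know that \(\pi_2F^2_3\to\pi_2F^0_3\) is injective, or at least that the classes it kills map to zero in \(\gr^2\). Consider the fibre sequence \(F^2_3\to F^1_3\to\gr^1_{\mot}K(T;\mathbf Z/3)\). The kernel of \(\pi_2F^2_3\to\pi_2F^1_3\) is the image of
\[
\pi_3\gr^1=H^{-1}_{\mot}(T,\mathbf Z/3(1))=0,
\]
which vanishes since \(\mathbf Z(1)_{\mot}(T)\simeq R\Gamma(T,\mathbf G_m)[-1]\) is concentrated in degrees \(1,2\). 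The map \(\pi_2F^1_3\to K_2(T;\mathbf Z/3)\) is an isomorphism by part (ii). Hence \(\pi_2F^2_3\to K_2(T;\mathbf Z/3)\) is injective.

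The class \(\widetilde\beta\widetilde\lambda\) is nonzero because its image in \(\gr^2\) is nonzero, so \(\beta([\mathcal L]-1)\ne0\). We will also use that the filtration is exhaustive and separated in the relevant range, which is not actually needed here: we only need \(\widetilde\beta\widetilde\lambda\ne0\) in \(\pi_2F^2_3\), and that follows from the graded image.
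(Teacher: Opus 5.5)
Your proposal is correct and follows the paper's proof essentially step for step. You lift \(\beta\) and \([\mathcal L]-1\) to \(\pi_2F^1_3\) and \(\pi_0F^1\), detect \(\widetilde\beta\,\widetilde\lambda\) in \(\gr^2\) via \(\operatorname{comp}_2\), the Kummer injection and the isomorphism \(\mu_3\simeq\mu_3^{\otimes2}\) given by \(\zeta\), and then combine \(H^{-1}_{\mot}(T,\mathbf Z/3(1))=0\) with part (ii) to get injectivity of \(\pi_2F^2_3\to K_2(T;\mathbf Z/3)\). The only difference is that you use just the homomorphism \(\operatorname{comp}_2\) instead of the Beilinson--Lichtenbaum isomorphism the paper cites, and a map is all that is needed to detect nonvanishing. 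Your last sentence about exhaustiveness and separatedness is garbled, but as you say, and as the paper also remarks, neither property is used.
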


\begin{proof}
	Use the notation \(F^q,F^q_3,i_q,i_{q,3},e_q,e_{q,3}\) of
	\cref{lem:filtered-bott-picard-compatibility}. Define
	\(\widetilde\beta:=(i_{1,3})_*^{-1}(\beta)\in\pi_2F^1_3\), and let
	\(\widetilde\lambda\in\pi_0F^1\) be the unique class such that
	\((i_1)_*(\widetilde\lambda)=[\mathcal L]-1\). By parts
	\textup{(ii)} and \textup{(iii)} of that lemma,
	\((e_{1,3})_*(\widetilde\beta)=\zeta\) and
	\((e_1)_*(\widetilde\lambda)=c_1^{\mot}(\mathcal L)\).
	
	The module multiplication of part \textup{(i)}, at bidegree
	\((1,1)\), is a map \(F^1_3\wedge F^1\to F^2_3\); it defines
	\(\widetilde\beta\,\widetilde\lambda\in\pi_2F^2_3\). Its image
	under \((i_{2,3})_*\) is
	\(\beta([\mathcal L]-1)\). The induced pairing on associated graded
	pieces is the motivic cup product, so
	\begin{equation}\label{eq:bott-picard-product-leading-term}
		(e_{2,3})_*(\widetilde\beta\,\widetilde\lambda)=
		\zeta\smile\bigl(c_1^{\mot}(\mathcal L)\bmod3\bigr)
		\in H^2_{\mot}(T,\mathbf Z/3(2)).
	\end{equation}
	
	The long exact sequence of the weight-one coefficient triangle contains
	\(
	\Pic(T)\xrightarrow{3}\Pic(T)\to
	H^2_{\mot}(T,\mathbf Z/3(1))
	\). Hence the second map induces an injection
	\(\Pic(T)/3\hookrightarrow H^2_{\mot}(T,\mathbf Z/3(1))\), and the
	hypothesis on \(\mathcal L\) gives
	\(c_1^{\mot}(\mathcal L)\bmod3\ne0\).
	
	Since \(3\) is invertible on \(T\), Beilinson-Lichtenbaum gives an
	isomorphism
	\(
	H^2_{\mot}(T,\mathbf Z/3(2))\xrightarrow{\sim}
	H^2_{\Et}(T,\mu_3^{\otimes2})
	\); see \cite[Cor.~5.6]{BouisMixedCharacteristicMotivicCohomology}.
	By part \textup{(iv)} of
	\cref{lem:filtered-bott-picard-compatibility}, the image of
	\eqref{eq:bott-picard-product-leading-term} under this isomorphism is
	\(\zeta\smile\kappa(\mathcal L)\).
	
	The Kummer sequence gives the exact segment
	\(
	\Pic(T)\xrightarrow{3}\Pic(T)\xrightarrow{\kappa}
	H^2_{\Et}(T,\mu_3)
	\), so \(\kappa\) induces an injection
	\(\Pic(T)/3\hookrightarrow H^2_{\Et}(T,\mu_3)\). In particular,
	\(\kappa(\mathcal L)\ne0\). Since \(\zeta\) is primitive, the map of
	\'etale sheaves
	\(
	\mu_3\to\mu_3^{\otimes2},\quad s\mapsto\zeta\otimes s
	\)
	is an isomorphism. The induced map on \(H^2_{\Et}(T,-)\) sends
	\(\kappa(\mathcal L)\) to \(\zeta\smile\kappa(\mathcal L)\).
	Consequently \(\zeta\smile\kappa(\mathcal L)\ne0\), and
	\eqref{eq:bott-picard-product-leading-term} implies
	\((e_{2,3})_*(\widetilde\beta\,\widetilde\lambda)\ne0\). Thus \(\widetilde\beta\,\widetilde\lambda\ne0\) in \(\pi_2F^2_3\).
	It remains to verify that the image of \(\widetilde\beta\,\widetilde\lambda\) in
	\(K_2(T;\mathbf Z/3)\) is nonzero. The weight-one identification
	\(\mathbf Z(1)_{\mot}(T)\simeq
	R\Gamma_{\mathrm{Zar}}(T,\mathbf G_m)[-1]\) and its coefficient
	triangle give
	\(H^{-1}_{\mot}(T,\mathbf Z/3(1))=0\). By
	\eqref{eq:bott-picard-coefficient-graded}, this is the equality
	\(\pi_3\gr^1_{\mot}K(T;\mathbf Z/3)=0\). The fibre sequence
	\(F^2_3\to F^1_3\to\gr^1_{\mot}K(T;\mathbf Z/3)\) therefore shows
	that \(\pi_2F^2_3\to\pi_2F^1_3\) is injective. Part \textup{(ii)} of
	\cref{lem:filtered-bott-picard-compatibility} identifies
	\(\pi_2F^1_3\) with \(K_2(T;\mathbf Z/3)\), and the image of \(\widetilde\beta\,\widetilde\lambda\)
	under this composite injection is \(\beta([\mathcal L]-1)\). Hence
	\(\beta([\mathcal L]-1)\ne0\), as claimed. Notice that the argument
	uses neither completeness nor separatedness of the motivic filtration.
\end{proof}

\section{The cyclotomic \texorpdfstring{$A_2$}{A2} surface}

Choose a primitive cube root of unity \(\zeta\), put
\(\pi:=\zeta-1\), and set
\(
O:=V[\zeta]=V[\pi].
\)
The element \(\pi\) satisfies
\(
\pi^2+3\pi+3=0.
\)
Since \(3\) is a uniformizer of \(V\), this equation is Eisenstein.
Hence \(O\) is a discrete valuation ring, finite free and totally ramified
of degree two over \(V\), with maximal ideal \((\pi)\).  Its nontrivial
automorphism is denoted by \(\sigma\).  Set
\[
 t:=\zeta^2-\zeta,\qquad u:=x-t,\qquad v:=x+t.
\]
Since $t^2=-3$, base change of \eqref{eq:A-definition} gives
\begin{equation}\label{eq:B-definition}
 B:=A\otimes_VO\simeq O\llbracket x,y\rrbracket/(uv-y^3).
\end{equation}
Let $P:=(u,y)$ and $Q:=(v,y)$ be the height-one prime ideals of $B$.
Put \(R:=B[1/3]\) and \(T:=\Spec(R)\).

Our divisor-class convention is that \([P]\in\Cl(B)\) denotes the class of
the prime Weil divisor \(V(P)\). By contrast, \(P|_T\) denotes the
restriction to \(T\) of the rank-one ideal sheaf \(P\), which corresponds
to \(\mathcal O_T(-V(P)|_T)\). Thus the two conventions differ by the
usual sign when divisor classes are identified with rank-one reflexive
sheaves.

\begin{proposition}\label{prop:geometry}
The ring $B$ is a complete normal local domain, finite free of rank two
over $A$.  
Its divisor class group is cyclic of order three, generated by
$[P]$, \( \operatorname{div}(u)=3P\), and \( \operatorname{div}(y)=P+Q\).
If \(\mathcal L:=P|_T\), then \(T\) is integral, regular
and one-dimensional, $\Pic(T)\simeq\mathbf Z/3$, the class of $\mathcal L$
is a generator, and $Q|_T\simeq\mathcal L^{-1}$.

\end{proposition}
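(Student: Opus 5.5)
The plan is to establish the algebraic structure of \(B\) first and then pass to \(T\) by localization of class groups. Since \(O\) is finite free of rank two over \(V\) with basis \(1,\pi\), the ring \(B=A\otimes_VO\) is finite free of rank two over \(A\). Identifying \(O\llbracket x,y\rrbracket\) with \(V\llbracket x,y\rrbracket\otimes_VO\) (valid because \(O\) is finite free over \(V\)) gives \eqref{eq:B-definition}, using \(x^2+3=x^2-t^2=uv\). Completeness and locality follow from finiteness over the complete local ring \(A\); alternatively one can read them off the presentation, whose maximal ideal is \((\pi,x,y)\).

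Next I will prove normality. The ring \(O\llbracket x,y\rrbracket\) is regular local of dimension three, and \(uv-y^3\) is a nonzero element of its maximal ideal, so \(B\) is a two-dimensional complete intersection. It is therefore Cohen--Macaulay and satisfies \(S_2\). For \(R_1\) I will use the Jacobian of \(f=uv-y^3\) in the variables \(x,y\), which is \((u+v,\,-3y^2)=(2x,-3y^2)\). The singular locus is cut out by these partials together with \(f\); because the base \(O\) is not a field, I will also check the fibre over \(\pi\). On \(B[1/3]\), the vanishing of the partials gives \(x=y=0\), hence \(uv=-t^2=3\ne0\), which contradicts \(f=0\). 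So \(R=B[1/3]\) is regular, being the localization of a regular excellent ring, or more simply by the Jacobian criterion over the regular ring \(O[1/3]\).

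The special fibre is \(B/\pi B=k\llbracket x,y\rrbracket/(x^2-y^3)\), since \(t\equiv 0\bmod\pi\). This is a cusp, reduced and singular only at the closed point. It follows that the height-one primes of \(B\) containing \(\pi\) are regular points: \(\pi\) is a parameter and the quotient is a DVR at the generic point of the cusp. Hence \(R_1\) holds, and Serre's criterion shows that \(B\) is a normal domain. The domain property also follows from normality together with locality.

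For the class group I will localize at \(y\). Since \(B/(y)=O\llbracket x\rrbracket/(uv)\), the minimal primes over \(y\) are \(P=(u,y)\) and \(Q=(v,y)\), and \(B/P\simeq O\llbracket x\rrbracket/(u)\simeq O\) is a domain. The ring \(B[1/y]\) is a localization of \(O\llbracket x,y\rrbracket[1/y]\), in which \(v=y^3u^{-1}\)... Here I expect the main obstacle: I must show \(\Cl(B[1/y])=0\). This ring is awkward to handle because \(B\) is a power series ring rather than a polynomial ring. I would first try the standard description of the \(A_2\) singularity as a \(\mu_3\)-invariant ring, namely \(B=O\llbracket s,w\rrbracket^{\mu_3}\) with \(u=s^3\), \(v=w^3\), \(y=sw\), once one checks that \(\zeta\in O\). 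The class group is then \(\operatorname{Hom}(\mu_3,O^\times)\simeq\mathbf Z/3\) by the Galois-descent description of the class group of a quotient by a group acting freely in codimension one (Samuel's theory); freeness in codimension one holds because the fixed locus is \(s=w=0\). The divisors of functions are computed in the cover: \(\operatorname{div}(u)\) lifts to \(3\operatorname{div}(s)\), giving \(\operatorname{div}(u)=3P\), and \(\operatorname{div}(y)=P+Q\) since \(y\) vanishes to order one along each. The class \([P]\) corresponds to the nontrivial character of \(s\), so it generates \(\Cl(B)\).

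Finally I pass to \(T\). Nagata's sequence \(\mathbf Z\to\Cl(B)\to\Cl(R)\to0\), with \(\mathbf Z\) generated by \(\operatorname{div}(3)\), needs the divisor of \(3\). Now \(\operatorname{div}(3)=2\operatorname{div}(\pi)\), and \(\operatorname{div}(\pi)\) is the cusp prime, which is principal. Hence \(\Cl(R)\simeq\Cl(B)\simeq\mathbf Z/3\), with the reverse inclusion holding because \(3\) does not lie in \(P\) (as \(O\) is a domain, \(3\ne0\)). The ring \(R\) is regular, so \(\Pic(T)=\Cl(R)\). For dimension one, \(R\) has dimension at most one since the closed point has been removed, and it is not zero-dimensional because \(P\) survives. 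The sign conventions give \(\mathcal L=P|_T\leftrightarrow-[P]\), which is again a generator, and \(Q|_T\simeq\mathcal L^{-1}\) follows from \(P+Q=\operatorname{div}(y)\sim0\).
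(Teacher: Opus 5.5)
Your arguments for finite freeness, for normality ($S_2$ from the hypersurface presentation, and $R_1$ via the Jacobian on $B[1/3]$ together with the prime $(\pi)$), and for the passage from $\Cl(B)$ to $\Pic(T)$ are fine. The gap is at the heart of the proposition: you do not establish that $\Cl(B)$ is cyclic of order three generated by $[P]$.

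First, the identification $B=O\llbracket s,w\rrbracket^{\mu_3}$ with $u=s^3$, $v=w^3$, $y=sw$ is false. The elements $u$ and $v$ are not independent coordinates, since $v-u=2t$ is a nonzero constant. Thus $B\simeq O\llbracket u,y\rrbracket/(u(u+2t)-y^3)$ is two-dimensional, while $O\llbracket s,w\rrbracket^{\mu_3}=O\llbracket s^3,w^3,sw\rrbracket\simeq O\llbracket U,V,Y\rrbracket/(UV-Y^3)$ is three-dimensional. The correct cover is $C=O\llbracket s,w\rrbracket/(w^3-s^3-2t)$, a regular local ring with $C^G=B$ for $G=\mathbf Z/3$ acting by $s\mapsto\zeta s$, $w\mapsto\zeta^{-1}w$.

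With the correct cover, the real obstruction appears. Because $\zeta\equiv1\pmod{\pi}$, one has $g(c)-c\in\pi C$ for all $c$, so $G$ acts trivially on $C/\pi C$. In fact $\pi C=(w-s)^3C$, since the three factors $w-\zeta^i s$ of $w^3-s^3=2\zeta\pi$ are associates. So $B\subset C$ is totally and wildly ramified along the divisor $\pi=0$. The fixed locus is $V(\pi)$, not the point $s=w=0$, and the action is not free in codimension one.

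Consequently, Samuel's descent gives only an injection $\Cl(B)\hookrightarrow H^1(G,C^\times)$, whose cokernel is controlled by the ramification index $3$ at $\pi B$. That cokernel is nonzero here: the $G$-invariant principal divisor $\operatorname{div}_C(w-s)$ is not pulled back from $B$, so $H^1(G,C^\times)$ has order $9$. Identifying $H^1(G,C^\times)$ with a character group relies on $H^1(G,1+\mathfrak m_C)=0$. That requires $3\in C^\times$, which is exactly what fails in residue characteristic $3$.

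So your argument yields neither $[P]\ne0$ nor the upper bound $|\Cl(B)|\le3$, and your first idea, proving $\Cl(B[1/y])=0$, is left open.

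Part of this can be done directly in $B$, as the paper does. One gets $\operatorname{div}(u)=3P$ from $B/(u)\simeq O\llbracket y\rrbracket/(y^3)$, and $\operatorname{div}(y)=P+Q$ from $B/(y)\simeq O\llbracket u\rrbracket/(u(u+2t))$. One gets $[P]\ne0$ because modulo $\pi$ the ideal $P$ becomes the non-principal maximal ideal of the cusp.

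Generation, however, needs a genuinely different input. The paper blows up $\mathfrak m_B$ once and checks that $\widetilde X$ is regular, with exceptional lines $E_1,E_2$ having intersection matrix $\begin{psmallmatrix}-2&1\\1&-2\end{psmallmatrix}$. It then uses Grothendieck existence and the vanishing of $H^1(E,\mathcal O_E(-nE))$ to get $\Pic(\widetilde X)\simeq\Pic(E)\simeq\mathbf Z^2$. This gives $\Cl(B)\simeq\Pic(\widetilde X\setminus E)\simeq\operatorname{coker}\begin{psmallmatrix}-2&1\\1&-2\end{psmallmatrix}\simeq\mathbf Z/3$.
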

\begin{proof}
	Let	$\mathfrak n:=(3,x,y)$ in $V\llbracket x,y\rrbracket$.  The ring $V\llbracket x,y\rrbracket$ is a three-dimensional complete
	regular local ring, and
	since \(3\) is a uniformizer of \(V\), one has
	\(3\notin\mathfrak n^2\), and
	$3+x^2-y^3\equiv3\pmod{\mathfrak n^2}$.  Thus the defining equation is
	part of a minimal system of generators of $\mathfrak n$, so
	\(A=V\llbracket x,y\rrbracket/(3+x^2-y^3)\) is a two-dimensional regular local domain
	\cite[Tags~032C, 00NQ and~00NP]{StacksProject}.  As a quotient of the complete
	noetherian local ring \(V\llbracket x,y\rrbracket\), the ring \(A\) is complete.  Its maximal
	ideal is \((x,y)\),
	and $3=y^3-x^2\in(x,y)^2$, which is precisely the asserted ramification.
	Since $O$ is free of rank two over $V$, the identity
	$B=A\otimes_VO$ makes $B$ finite free of rank two over $A$.  The
	presentation \eqref{eq:B-definition} makes it a complete local ring
	\cite[Tag~0325]{StacksProject}.
	For this uniformizer one has \(t=\zeta\pi\) and
	\(\pi^2=-3\zeta\). Since $A$ is a domain of characteristic zero, it is
	$V$-torsion-free and hence flat over the discrete valuation ring $V$
	\cite[Tag~0539]{StacksProject}.  Consequently $B=A\otimes_VO$ is
	$O$-flat, so $\pi$ is a nonzerodivisor.  Reduction modulo $\pi$ gives
	\[
	B/\pi B\simeq k\llbracket x,y\rrbracket/(x^2-y^3)
	\simeq k\llbracket s^2,s^3\rrbracket,
	\qquad x\longmapsto s^3,\quad y\longmapsto s^2,
	\]
	which is a domain.  
	
	Krull's intersection theorem gives \(\bigcap_n\mathfrak m_B^n=0\), and
	\(\pi^nB\subseteq\mathfrak m_B^n\), so every nonzero \(b\in B\) lies in
	\(\pi^rB\) for a largest \(r\ge0\).  If \(b=\pi^rb_0\) and
	\(c=\pi^sc_0\) are nonzero with \(b_0,c_0\notin\pi B\) and \(bc=0\), then
	cancelling the nonzerodivisor \(\pi^{r+s}\) gives \(b_0c_0=0\), whose
	reduction modulo \(\pi\) contradicts the display above.  Hence \(B\) is a
	domain.

	Write
	\(
	B\simeq O\llbracket u,y\rrbracket/(u^2+2tu-y^3).
	\)
	The ambient ring is regular local of dimension three and the displayed
	equation is a nonzerodivisor.  Hence $B$ is a two-dimensional
	Cohen-Macaulay hypersurface, and in particular satisfies $S_2$
	\cite[Tags~00NQ, 02JN and~0342]{StacksProject}.

	Moreover
	\(B[1/\pi]\simeq
	A[1/3]\otimes_{V[1/3]}O[1/3]\).
	The ring \(A[1/3]\) is regular, while
	\(O[1/3]/V[1/3]\) is finite \'etale; regularity is preserved by
	localization and \'etale base change
	\cite[Tags~0AFS and~0AH0]{StacksProject}.  Hence \(B[1/\pi]\) is
	regular.  It is one-dimensional: every prime not containing \(\pi\)
	has height at most one, since the unique height-two prime
	\(\mathfrak m_B\) contains \(\pi\), while \(P\) survives after inverting
	\(\pi\), because \(B/P\simeq O\) and \(\pi\ne0\) in \(O\).

	Since $B/\pi B$ is a
	domain, $(\pi)$ is prime; it has height one, and
	$B_{(\pi)}$ is one-dimensional with maximal ideal generated by $\pi$,
	so it is a discrete valuation ring.  Therefore $B$ satisfies $R_1$ and
	$S_2$, and Serre's criterion shows that it is normal
	\cite[Tag~031O]{StacksProject}.

		We next compute \(\Cl(B)\).  Since
	\(B/(u)\simeq O\llbracket y\rrbracket/(y^3)\), the only height-one
	prime containing \(u\) is \(P\).  In \(B_P\), the elements \(t\) and
	\(v=u+2t\) are units, \(PB_P=(y)\), and \(u=y^3/v\).  Thus
	\(\operatorname{div}(u)=3P\).  Similarly,
	\(B/(y)\simeq O\llbracket u\rrbracket/(u(u+2t))\), and localization
	at either of its two minimal primes shows that
	\(\operatorname{div}(y)=P+Q\).  Hence \(3[P]=0\) and
	\([P]+[Q]=0\).
	
	Moreover, \([P]\ne0\).  Indeed, if \(P\) were principal, its image
	modulo \(\pi\) would be principal.  Since \(u=x-t\) and \(t=\zeta\pi\),
	this image is the maximal ideal
	\((\bar x,\bar y)\) of
	\(k\llbracket x,y\rrbracket/(x^2-y^3)\), whereas
	\((\bar x,\bar y)/(\bar x,\bar y)^2\) has \(k\)-dimension two.
	Thus \([P]\) has exact order three.
	
	It remains to prove that \(P\) generates the whole class group.\footnote{This is the direct \(A_2\) instance of Lipman's
		discriminant-group description of class groups of rational surface
		singularities; compare
		\cite[Prop.~17.1]{LipmanRationalSingularities}.}
	Let
	\(\rho\colon\widetilde X\to\Spec(B)\) be the blow-up of
	\(\mathfrak m_B=(\pi,x,y)\); it is projective, hence proper \cite[Tag~02NS]{StacksProject}.  It is an
	isomorphism over \(\Spec(B)\setminus\{\mathfrak m_B\}\), which is
	regular because \(B\) is two-dimensional and satisfies \(R_1\); so only
	the points of the exceptional divisor have to be examined.  Since
	\(\Spec(B)\) is a hypersurface in \(\Spec(O\llbracket x,y\rrbracket)\)
	and blow-ups commute with strict transforms \cite[Tag~080E]{StacksProject}, \(\widetilde X\) is the
	strict transform of \(\Spec(B)\) in the blow-up of
	\(\Spec(O\llbracket x,y\rrbracket)\) at its maximal ideal, which is
	regular of dimension three; in each chart below \(\widetilde X\) is
	therefore cut out by one equation in a three-dimensional regular local
	ring.  Since
	\(\pi^2=-3\zeta\), the equation of \(B\) is
	\(x^2-\zeta^2\pi^2-y^3=0\), whose initial form is \(X^2-\Pi^2\), since \(\zeta\equiv1\bmod\pi\).  This
	form is a nonzerodivisor in \(k[X,\Pi,Y]\), so it generates the initial
	ideal and
	\(\operatorname{gr}_{\mathfrak m_B}(B)\simeq k[X,\Pi,Y]/(X^2-\Pi^2)\).
	The exceptional divisor is therefore the reduced
	union \(E=E_1\cup E_2\) of the lines \(X=\Pi\) and \(X=-\Pi\) in
	\(\mathbf P^2_k\), meeting transversally at
	\(e=[0:0:1]\).
	
	On the \(\Pi\)-chart the strict transform has equation
	\((x_1-\zeta)(x_1+\zeta)=\pi y_1^3\).  Along the exceptional divisor
	exactly one of the two factors lies in the maximal ideal, the other
	being a unit because \(2\zeta\) is a unit; hence the equation is part of a
	regular system of parameters and the local ring is regular.  This chart
	contains \(E\setminus\{e\}\).
	
	At \(e\) we pass to the \(Y\)-chart, where \(x=yx_3\) and
	\(\pi=y\pi_3\); eliminating \(y\) by the equation
	\(y=x_3^2-\zeta^2\pi_3^2\) of the strict transform, the completed local
	ring there is
	\(O\llbracket x_3,\pi_3\rrbracket/(\pi-(x_3^2-\zeta^2\pi_3^2)\pi_3)\),
	which is regular because its defining equation is congruent to \(\pi\)
	modulo the square of the maximal ideal.  Hence \(\widetilde X\) is
	regular \cite[Tag~07NY]{StacksProject}.
	
	In that chart, \(E\) is cut out by
	\(y=(x_3-\zeta\pi_3)(x_3+\zeta\pi_3)\), and
	\((x_3-\zeta\pi_3,x_3+\zeta\pi_3)=(x_3,\pi_3)\) is the maximal ideal;
	so \(E_1\) and \(E_2\) meet transversally at \(e\).  On the
	\(\Pi\)-chart they are disjoint, because \(2\zeta\) is a unit.  Hence the
	scheme-theoretic intersection \(E_1\cap E_2\) is the reduced point
	\(e\), whose residue field is \(k\).

	Each \(E_i\) is a line in \(\mathbf P^2_k\), hence a proper \(k\)-scheme
	of dimension one, so every invertible \(\mathcal O_{E_i}\)-module
	\(\mathcal N\) has a degree
	\(\deg_{E_i}(\mathcal N)=\chi(E_i,\mathcal N)-\chi(E_i,\mathcal O_{E_i})\)
	\cite[Tag~0AYR]{StacksProject}, and \(\deg_{E_i}\) is a homomorphism
	\(\Pic(E_i)\to\mathbf Z\) \cite[Tag~0AYX]{StacksProject}.  For a Cartier
	divisor \(D\) on \(\widetilde X\) put
	\(D\cdot E_i:=\deg_{E_i}\bigl(\mathcal O_{\widetilde X}(D)|_{E_i}\bigr)\),
	which is therefore additive in \(D\).
	
	By construction of the blow-up, \(\mathfrak m_B\mathcal O_{\widetilde X}\)
	is the ideal of the effective Cartier divisor \(E\), and
	\(\mathcal O_{\widetilde X}(-E)=\mathcal O_{\widetilde X}(1)\)
	\cite[Tag~02OS]{StacksProject}.  Under the closed immersion
	\(E=\operatorname{Proj}(\operatorname{gr}_{\mathfrak m_B}(B))
	\subset\mathbf P^2_k\) one has
	\(\mathcal O_{\widetilde X}(1)|_E=\mathcal O_{\mathbf P^2}(1)|_E\), whose
	restriction to the line \(E_i\simeq\mathbf P^1_k\) is
	\(\mathcal O_{\mathbf P^1}(1)\), of degree one.  Hence
	\(E\cdot E_i=-1\) for \(i=1,2\).
	
	Moreover \(E_2\not\subset E_1\), so the pullback of \(E_1\) along
	\(E_2\hookrightarrow\widetilde X\) is an effective Cartier divisor on
	\(E_2\) and
	\(\mathcal O_{\widetilde X}(E_1)|_{E_2}=\mathcal O_{E_2}(E_1\cap E_2)\).
	By the chart computations above, \(E_1\cap E_2\) is the reduced point
	\(e\) with residue field \(k\), so
	\(E_1\cdot E_2=\dim_k\Gamma(E_1\cap E_2,\mathcal O)=1\)
	\cite[Tag~0AYY]{StacksProject}; exchanging the roles of \(E_1\) and
	\(E_2\) gives \(E_2\cdot E_1=1\).  Finally \(E\) is reduced with
	irreducible components \(E_1\) and \(E_2\), and \(\widetilde X\) is
	regular, so \(E=E_1+E_2\) as Cartier divisors.  Additivity in the first
	variable therefore yields
	\(E_1^2=E\cdot E_1-E_2\cdot E_1=-2\) and
	\(E_2^2=E\cdot E_2-E_1\cdot E_2=-2\).

	For \(n\ge1\) let \(E^{(n)}\subset\widetilde X\) be the closed subscheme
	defined by \(\mathcal O_{\widetilde X}(-nE)\), so that \(E^{(1)}=E\) and
	\(E^{(n)}=\widetilde X\times_{\Spec(B)}\Spec(B/\mathfrak m_B^n)\).  Let
	\(\nu\colon E_1\sqcup E_2\to E\) be the normalization.  The sequence
	\[
	1\to\mathcal O_E^\times\to\nu_*\mathcal O^\times\to k^\times(e)\to1
	\]
	is exact and \(H^0(\nu_*\mathcal O^\times)=k^\times\times k^\times\to
	k^\times(e)\) is surjective, so multidegree identifies \(\Pic(E)\) with
	\(\mathbf Z^2\).  
	
	Tensoring the additive normalization sequence
	\(0\to\mathcal O_E\to\nu_*\mathcal O\to k(e)\to0\) with
	\(\mathcal O_E(-nE)\) gives a sequence whose restrictions to the two
	components are \(\mathcal O_{\mathbf P^1}(n)\).  Since evaluation at
	\(e\) is surjective on global sections and
	\(H^1(\mathbf P^1,\mathcal O(n))=0\) for \(n\ge0\), it follows that
	\(H^1(E,\mathcal O_E(-nE))=0\).  Also
	\(H^2(E,\mathcal O_E(-nE))=0\), since \(E\) is a curve. For \(n\ge1\), the morphism
	\(E^{(n+1)}\to E^{(n)}\) is a square-zero thickening with ideal
	\(\mathcal I_n:=
	\mathcal O_{\widetilde X}(-nE)/
	\mathcal O_{\widetilde X}(-(n+1)E)
	\simeq\mathcal O_E(-nE)\).
	The exact sequence of Zariski sheaves
	\[
	1\longrightarrow1+\mathcal I_n\longrightarrow
	\mathcal O_{E^{(n+1)}}^\times\longrightarrow
	\mathcal O_{E^{(n)}}^\times\longrightarrow1,
	\qquad 1+\mathcal I_n\simeq\mathcal I_n,
	\]
	gives an exact segment
	\(H^1(E,\mathcal I_n)\to\Pic(E^{(n+1)})\to
	\Pic(E^{(n)})\to H^2(E,\mathcal I_n)\)
	\cite[Tag~0C6R]{StacksProject}. The two outer groups vanish by the
	previous paragraph, so
	\(\Pic(E^{(n+1)})\simeq\Pic(E^{(n)})\).

	Let
	\(
	(\mathcal L_n)_{n\geq1}
	\)
	be a compatible system with
	\(\mathcal L_n\in\Pic(E^{(n)})\). Since \(B\) is noetherian and
	\(\mathfrak m_B\)-adically complete and
	\(\rho\colon\widetilde X\to\Spec(B)\) is proper, Grothendieck
	existence identifies the completion functor
	\[
	\operatorname{Coh}(\widetilde X)
	\xrightarrow{\ \sim\ }
	\operatorname{Coh}
	\bigl(\widetilde X,
	\mathfrak m_B\mathcal O_{\widetilde X}\bigr),
	\qquad
	\mathcal F\longmapsto
	\bigl(\mathcal F/\mathfrak m_B^n\mathcal F\bigr)_{n\geq1};
	\]
	see \cite[Tag~088C]{StacksProject}. Here
	\(
	\mathfrak m_B^n\mathcal O_{\widetilde X}
	=\mathcal O_{\widetilde X}(-nE)
	\),
	so the \(n\)-th quotient is a coherent sheaf on \(E^{(n)}\).
	Consequently the system \((\mathcal L_n)_n\) is the completion of a
	coherent \(\mathcal O_{\widetilde X}\)-module \(\mathcal F\), unique
	up to isomorphism.
	
	The restriction
	\(\mathcal F|_E\simeq\mathcal L_1\) is locally free of rank one.
	Since \(\widetilde X\) is noetherian, \(\mathcal F\) is finitely
	presented, and its locally-free-of-rank-one locus is open
	\cite[Tag~05GD]{StacksProject}. Let \(Z\subset\widetilde X\) be its
	closed complement. Then \(Z\cap E=\varnothing\). The composite
	\(
	Z\longrightarrow\widetilde X\xrightarrow{\rho}\Spec(B)
	\)
	is proper, so \(\rho(Z)\) is closed
	\cite[Tag~01W6]{StacksProject}. If \(Z\) were nonempty, the nonempty
	closed subset \(\rho(Z)\) of the local scheme \(\Spec(B)\) would
	contain its closed point. Its inverse image would then meet the
	closed fibre \(E\), contradicting \(Z\cap E=\varnothing\).
	Therefore \(Z=\varnothing\), and \(\mathcal F\) is invertible.
	Essential surjectivity and full faithfulness in Grothendieck existence
	therefore give
	\[
	\Pic(\widetilde X)
	\xrightarrow{\sim}
	\varprojlim_n\Pic(E^{(n)})
	\xrightarrow{\sim}
	\Pic(E)
	\xrightarrow{\sim}
	\mathbf Z^2.
	\]

	Put \(\widetilde X^\circ:=\widetilde X\setminus E\).  By
	\cite[Tag~02OS]{StacksProject} the morphism \(\rho\) restricts to an
	isomorphism
	\(\widetilde X^\circ\simeq\Spec(B)\setminus\{\mathfrak m_B\}\); as noted
	at the beginning of this argument, these schemes are regular.  The
	scheme \(\widetilde X\) is integral \cite[Tag~02ND]{StacksProject},
	noetherian and regular, and so is its open subscheme
	\(\widetilde X^\circ\); their local rings are unique factorization
	domains \cite[Tag~0AG0]{StacksProject}, whence
	\(\Pic\xrightarrow{\ \sim\ }\Cl\) for both
	\cite[Tag~0BE9]{StacksProject}.

	We use the presentation of \(\Cl\) by Weil divisors modulo principal
	divisors \cite[Tags~0BE2, 0BE3 and~0BE4]{StacksProject}.  Taking closures
	identifies the prime divisors of \(\widetilde X^\circ\) with the prime
	divisors of \(\widetilde X\) meeting \(\widetilde X^\circ\), so
	restriction
	\(\operatorname{Div}(\widetilde X)\to\operatorname{Div}(\widetilde X^\circ)\)
	is surjective with kernel free on the prime divisors contained in \(E\),
	namely \(E_1\) and \(E_2\).  As \(\widetilde X^\circ\) is dense in
	\(\widetilde X\) and orders of vanishing are computed at generic points,
	this restriction carries \(\operatorname{div}(f)\) to
	\(\operatorname{div}(f)\) for every \(f\) in the common function field.
	Hence
	\(\mathbf ZE_1\oplus\mathbf ZE_2\to
	\Pic(\widetilde X)\to\Pic(\widetilde X^\circ)\to0\)
	is exact, the first map sending \(E_j\) to
	\(\mathcal O_{\widetilde X}(E_j)\).
	
	The identification \(\Pic(\widetilde X)\simeq\mathbf Z^2\) obtained above
	is \(\mathcal M\mapsto\bigl(\deg_{E_1}(\mathcal M|_{E_1}),
	\deg_{E_2}(\mathcal M|_{E_2})\bigr)\), so by the definition of the
	intersection numbers it sends \(\mathcal O_{\widetilde X}(E_j)\) to
	\((E_j\cdot E_1,E_j\cdot E_2)\).  The first map of the exact sequence
	above is therefore given by the matrix
	\(M:=\begin{psmallmatrix}
		E_1\cdot E_1&E_2\cdot E_1\\E_1\cdot E_2&E_2\cdot E_2
	\end{psmallmatrix}
	=\begin{psmallmatrix}-2&1\\1&-2\end{psmallmatrix}\),
	whose \(j\)-th column is the image of \(E_j\).
	
	Write \(\epsilon_1,\epsilon_2\) for the images in
	\(\operatorname{coker}(M)\) of the standard basis of \(\mathbf Z^2\).
	The two columns of \(M\) impose the relations
	\(-2\epsilon_1+\epsilon_2=0\) and \(\epsilon_1-2\epsilon_2=0\); the first
	gives \(\epsilon_2=2\epsilon_1\), and substituting it into the second
	gives \(3\epsilon_1=0\).  Thus \(\operatorname{coker}(M)\) is cyclic of
	order dividing three, generated by \(\epsilon_1\).  It is exactly of
	order three, since the surjection \(\mathbf Z^2\to\mathbf Z/3\),
	\((a,b)\mapsto a+2b\), kills both columns of \(M\).  Consequently
	\(\Pic(\widetilde X^\circ)\simeq\operatorname{coker}(M)\simeq\mathbf Z/3\).
	
	Finally, no prime divisor of \(\Spec(B)\) is contained in
	\(\{\mathfrak m_B\}\), so the same restriction argument, applied to
	\(\Spec(B)\setminus\{\mathfrak m_B\}\subset\Spec(B)\), gives
	\(\Cl(B)\xrightarrow{\ \sim\ }\Cl(\widetilde X^\circ)
	=\Pic(\widetilde X^\circ)\simeq\mathbf Z/3\).  Since \([P]\) has exact
	order three, it is a generator.

	We have $B[1/3]=B[1/\pi]$.  Restriction of Weil divisors to
	$D(\pi)$ is surjective, and its kernel is generated by the height-one
	primes containing $\pi$.  The only such prime is $(\pi)$, whose class is
	zero because it is the principal divisor of $\pi$.  The presentation of
	the divisor class group by Weil divisors and principal divisors therefore
	gives
	\(
	\Cl(B)\xrightarrow{\ \sim\ }\Cl(B[1/3])
	\)
	\cite[Tags~0BE2, 0BE3 and~0BE4]{StacksProject}.  The ring $B[1/3]$ is regular, so
	\(
	\Pic(T)\xrightarrow{\ \sim\ }\Cl(B[1/3])
	\)
	\cite[Tags~0AG0 and~0BE9]{StacksProject}.  Hence $\Pic(T)\simeq\mathbf Z/3$ and
	$\mathcal L=P|_T$ is a generator.  Since $\operatorname{div}(y)=P+Q$, we have
	$Q|_T\simeq(P|_T)^{-1}=\mathcal L^{-1}$.
\end{proof}

\section{Construction of the generic-kernel class}

If $R$ is noetherian, write $G(R)$ for the $K$-theory of finitely generated
$R$-modules; if $R$ is regular, $K(R)\to G(R)$ is an equivalence.  
A finite ring map \(f\colon R\to S\) gives a pushforward
\(
f_*\colon G(S)\longrightarrow G(R)
\)
by restriction of scalars, while a flat map gives a pullback by
base change.

\begin{lemma}\label{lem:divisor-y}
	The element $y$ is prime in $A$, and there is an isomorphism of
	$V$-algebras
	\[
	A/yA\xrightarrow{\ \sim\ }O,\qquad x\longmapsto t.
	\]
\end{lemma}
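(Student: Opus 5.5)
We compute the quotient directly from the presentation. Since \(A=V\llbracket x,y\rrbracket/(3+x^2-y^3)\), we have
\[
A/yA\simeq V\llbracket x,y\rrbracket/(y,\,3+x^2-y^3)\simeq V\llbracket x\rrbracket/(x^2+3).
\]
The polynomial \(x^2+3\) is monic of degree two, and its constant term \(3\) is a uniformizer of \(V\), so it is Eisenstein over \(V\). By Weierstrass preparation (or directly, because a monic polynomial whose non-leading coefficients lie in the maximal ideal makes \(V[x]/(x^2+3)\) already \((3,x)\)-adically complete), the natural map
\[
V[x]/(x^2+3)\longrightarrow V\llbracket x\rrbracket/(x^2+3)
\]
is an isomorphism. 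Concretely, \(V[x]/(x^2+3)\) is finite free over the complete ring \(V\), and \(x^2=-3\) lies in the maximal ideal, so \(x\) is topologically nilpotent. Hence \(V\llbracket x\rrbracket\to V[x]/(x^2+3)\), defined by sending \(x\) to \(x\), is well defined and surjective, and its kernel is \((x^2+3)\) by division by a monic polynomial.

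Next we identify \(V[x]/(x^2+3)\) with \(O\). The relation \(t^2=-3\) (recorded in the previous section) gives a \(V\)-algebra map \(V[x]/(x^2+3)\to O\) with \(x\mapsto t\). Both sides are free \(V\)-modules of rank two. The element \(t=\zeta\pi\) is a uniformizer of \(O\), so \(\{1,t\}\) is a \(V\)-basis of \(O\): the \(V\)-span \(V+Vt\) contains \(\pi=\zeta^{-1}t\) up to a unit and surjects onto \(O/\pi O=k\), so Nakayama applies. The map therefore sends a basis to a basis and is an isomorphism. Composing the isomorphisms gives the desired \(V\)-algebra isomorphism \(A/yA\xrightarrow{\sim}O\) with \(x\mapsto t\).

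Finally, \(O\) is a discrete valuation ring (established earlier), hence a domain, and \(y\neq0\) in the domain \(A\). So \(yA\) is a prime ideal and \(y\) is a prime element.

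The only step that needs care is the passage from the power series quotient to the polynomial quotient. The cleanest way is to use the \(\mathfrak m\)-adic completeness of the finite \(V\)-module \(V[x]/(x^2+3)\) to construct the inverse map.
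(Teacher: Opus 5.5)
Your proof is correct and follows essentially the paper's route: identify $A/yA$ with $V\llbracket x\rrbracket/(x^2+3)$, use Weierstrass division to see that it is $V$-free on $1,x$, and send this basis to the $V$-basis $1,t$ of $O$. The only differences are that the paper maps $V\llbracket x\rrbracket\to O$ directly (using completeness of $O$) instead of passing through $V[x]/(x^2+3)$, and one wording fix in your basis step: Nakayama over $V$ needs $1,t$ to span $O/3O=O/\pi^2O$, which holds since $t=\zeta\pi\equiv\pi \pmod{\pi^2}$, and spanning $O/\pi O$ alone would not suffice. Alternatively, note as the paper does that $t=-1-2\zeta$ with $2\in V^\times$, so $V[t]=V[\zeta]=O$.
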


\begin{proof}
	Since \(t=-1-2\zeta\) and \(2\in V^\times\), one has
	\(O=V[\zeta]=V[t]\); moreover \(t^2=-3\).  
	
	Since \(t=\zeta\pi\) lies in the maximal ideal of the complete local
	ring \(O\), substitution \(x\mapsto t\) defines a continuous
	\(V\)-algebra map \(V\llbracket x\rrbracket\to O\).  As \(t^2=-3\),
	it induces a surjection
	\(
	V\llbracket x\rrbracket/(x^2+3)\to O
	\),
	the surjectivity following from \(O=V[t]\).
	By Weierstrass division the source is free over \(V\) with basis
	\(1,x\), while \(1,t\) is a \(V\)-basis of \(O\).  The map is therefore
	an isomorphism.

	Hence \(A/yA\simeq O\) is a domain, so \(yA\) is
	prime.
\end{proof}

Since $O$ is commutative local, determinant identifies $K_1(O)$ with
$O^\times$ \cite[Lem.~III.1.4]{Weibel13KBook}, and the unit $\zeta$ has
order three.  Exactness of the coefficient sequence for $O$ therefore
supplies a class $\beta_0\in K_2(O;\mathbf Z/3)$ with
$\partial_O(\beta_0)=[\zeta]$.  Put
\begin{equation}\label{eq:beta-definition}
	\beta:=\frac12\bigl(\beta_0-\sigma^*\beta_0\bigr)\in K_2(O;\mathbf Z/3).
\end{equation}
Because $\sigma(\zeta)=\zeta^2$ and $\tfrac12=2$ in $\mathbf Z/3$, one has
$\partial_O(\beta)=2\bigl([\zeta]-2[\zeta]\bigr)=-2[\zeta]=[\zeta]$ and
$\sigma^*\beta=-\beta$.

Let $i\colon\Spec(A/yA)\to\Spec(A)$ be the closed immersion and
$i_*\colon G(A/yA;\mathbf Z/3)\to G(A;\mathbf Z/3)$ the induced pushforward.
Using \cref{lem:divisor-y} to read $\beta$ as a class on $\Spec(A/yA)$, set
\begin{equation}\label{eq:a-definition}
	a:=i_*(\beta)\in G_2(A;\mathbf Z/3)=K_2(A;\mathbf Z/3).
\end{equation}

\begin{lemma}\label{lem:split-fibre}
	Write \(K_O:=\Frac(O)\).  The ring \(R=B[1/3]\) is flat over \(A\), and
	\(
	R/yR\xrightarrow{\ \sim\ }K_O\times K_O ,
	\)
	the two factors being $R/PR$ and $R/QR$.  Under \cref{lem:divisor-y} and
	the identifications $B/P\simeq O\simeq B/Q$ induced by $O\subset B$, the
	map $A/yA\to R/yR$ is $\lambda\mapsto\bigl(\lambda,\sigma(\lambda)\bigr)$.
\end{lemma}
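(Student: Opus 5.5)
Flatness is immediate: \(B=A\otimes_VO\) is free of rank two over \(A\) by \cref{prop:geometry}, and \(R=B[1/3]\) is a localization of \(B\). Hence \(A\to B\to R\) is a composite of flat maps.

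For the fibre, I first compute \(B/yB\). By \eqref{eq:B-definition},
\[
B/yB\simeq O\llbracket x\rrbracket/(uv)=O\llbracket x\rrbracket/\bigl((x-t)(x+t)\bigr).
\]
Inverting \(3\) gives \(R/yR\simeq O\llbracket x\rrbracket[1/3]/((x-t)(x+t))\). In this ring the ideals \((x-t)\) and \((x+t)\) are comaximal, because their sum contains \(2t\), and \(2t\) is a unit once \(3=-t^2\) is inverted. The Chinese remainder theorem then splits \(R/yR\) as \(R/(y,u)R\times R/(y,v)R=R/PR\times R/QR\).

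It remains to identify each factor. Substituting \(x\mapsto t\) gives \(O\llbracket x\rrbracket/(x-t)\simeq O\) by the Weierstrass argument of \cref{lem:divisor-y}, so \(B/P\simeq O\), and after inverting \(3\) (equivalently \(\pi\)) we get \(R/PR\simeq O[1/\pi]=K_O\). The same argument with \(x\mapsto -t\) gives \(B/Q\simeq O\) and \(R/QR\simeq K_O\). In both cases the identification is \(O\)-linear, since \(O\subset B\) maps identically.

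Finally I trace \(A/yA\to R/yR\). The isomorphism \(A/yA\simeq O\) of \cref{lem:divisor-y} is \(V\)-linear and sends \(x\mapsto t\), so it suffices to follow \(x\) and \(V\). Into \(B/P\), \(x\) maps to \(t\), which is the first coordinate \(\lambda\). Into \(B/Q\), \(x\) maps to \(-t\). Since \(\sigma(\zeta)=\zeta^2\), we have \(\sigma(t)=\zeta-\zeta^2=-t\), so \(-t=\sigma(t)\). Because \(\sigma\) fixes \(V\) and \(O=V[t]\), the second coordinate is \(\sigma(\lambda)\) for every \(\lambda\in O\). The maps are continuous, though as finite \(V\)-algebra maps this is automatic.

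The main obstacle is bookkeeping the identification \(A/yA\simeq O\), where \(x\mapsto t\), against \(B/Q\simeq O\), which is induced by \(O\subset B\), so that the twist \(\sigma\) lands in the right factor. The sign check \(\sigma(t)=-t\) is the crux, and no substantive difficulty remains beyond it.
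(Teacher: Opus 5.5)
Your proof is correct and follows the paper's argument essentially step for step. Both proofs get flatness from finite freeness plus localization, and both use the presentation \(B/yB\simeq O\llbracket x\rrbracket/\bigl((x-t)(x+t)\bigr)\), comaximality via the unit \(2t\) once \(3\) is inverted, and the Chinese remainder theorem. Both then identify the two factors with \(R/PR\) and \(R/QR\) and trace the generator \(x\mapsto t,\,-t\) using \(\sigma(t)=-t\). Your extra justification of \(O\llbracket x\rrbracket/(x-t)\simeq O\) only spells out what the paper leaves implicit.
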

\begin{proof}
	By \cref{prop:geometry} the ring $B$ is finite free over $A$, hence flat,
	and $R$ is a localization of $B$; so $R$ is flat over $A$.  Setting
	$y=0$ in \eqref{eq:B-definition} gives
	$B/yB\simeq O\llbracket x\rrbracket/\bigl((x-t)(x+t)\bigr)$.  Since
	$t^2=-3$, the element $2t$ is a unit after inverting $3$, so $(x-t)$ and
	$(x+t)$ are comaximal in $(B/yB)[1/3]$ and the Chinese remainder theorem
	gives
	\[
	R/yR\simeq
	\bigl(O\llbracket x\rrbracket/(x-t)\bigr)[1/3]\times
	\bigl(O\llbracket x\rrbracket/(x+t)\bigr)[1/3]
	\simeq K_O\times K_O ,
	\]
	the two projections sending $x$ to $t$ and to $-t$.  As $u=x-t$ and
	$v=x+t$, the first factor is $R/PR$ and the second is $R/QR$.  Finally
	$A/yA$ is generated over $V$ by the class of $x$, which corresponds to
	$t\in O$ under \cref{lem:divisor-y}, and $\sigma(t)=-t$; so the map
	$A/yA\to R/yR$ is $\lambda\mapsto(\lambda,\sigma\lambda)$.
\end{proof}

\begin{proposition}\label{prop:class}
	The class $a$ is nonzero, its image in $K_2(F;\mathbf Z/3)$ is zero, and
	$\partial_A(a)=0$.
\end{proposition}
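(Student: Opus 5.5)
Three things are claimed about \(a\): its generic image vanishes, its Bockstein vanishes, and \(a\ne0\). The plan is to prove them in that order, with nonvanishing checked after flat base change to \(T=\Spec(R)\).

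\textbf{Generic vanishing.} By construction \(a=i_*(\beta)\) lies in the image of \(G(A/yA;\mathbf Z/3)\to G(A;\mathbf Z/3)\). Localization gives the fibre sequence
\[
G(A/yA)\to G(A)\to G(A[1/y]),
\]
which stays a fibre sequence after smashing with \(\mathbf S/3\). Hence the image of \(a\) in \(K_2(A[1/y];\mathbf Z/3)\) is zero. Since \(A[1/y]\to F\) factors the map \(A\to F\), the image of \(a\) in \(K_2(F;\mathbf Z/3)\) is zero as well.

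\textbf{Vanishing of the Bockstein.} The pushforward \(i_*\) is induced by a map of spectra, so it commutes with the Bockstein. Thus \(\partial_A(a)=i_*(\partial_O\beta)=i_*[\zeta]\in K_1(A)=G_1(A)\). I would rewrite \(\beta\) using \(\sigma\): because \(\sigma^*\beta=-\beta\), we get \(\partial_O\beta=[\zeta]\) with \(\sigma^*[\zeta]=[\zeta^2]=-[\zeta]\). It is cleaner, though, to compute \(i_*[\zeta]\) directly. Multiplication by \(\zeta\) is an automorphism of the \(A\)-module \(O=A/yA\). This module has the free resolution \(0\to A\xrightarrow{y}A\to O\to0\), and \(\zeta\) is the reduction of \(\tilde\zeta:=(-1-x)/2\) (note \(t\leftrightarrow x\) and \(\zeta=(-1-t)/2\)). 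The element \(\tilde\zeta\) is a unit of \(A\), and multiplication by \(\tilde\zeta\) lifts the automorphism to the whole resolution. Therefore
\[
i_*[\zeta]=[\tilde\zeta]-[\tilde\zeta]=0
\]
in \(K_1(A)\), by additivity for automorphisms of resolutions. This gives \(\partial_A(a)=0\); the absence of cube roots of unity in \(A\) is not needed in this step.

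\textbf{Nonvanishing.} Pull \(a\) back along the flat map \(g\colon A\to R\) of \cref{lem:split-fibre}. Flat base change for the Tor-independent square formed by \(A/yA\to A\) and \(g\) gives
\[
g^*a=j_*\bigl(\text{pullback of }\beta\text{ to }R/yR\bigr),
\]
where \(j\) is the inclusion of \(\Spec(R/yR)\). By \cref{lem:split-fibre}, \(R/yR\simeq K_O\times K_O\), and the pulled-back class is \((\beta,\sigma^*\beta)=(\beta,-\beta)\) restricted to \(K_O\). Let \(\beta_R\) be the image of \(\beta\) under \(O\subset R\); it satisfies \(\partial_T(\beta_R)=[\zeta]\). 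For the two closed immersions into \(T\), the projection formula gives
\[
j_{P*}(\beta|_{K_O})=\beta_R\cdot[\mathcal O_{R/PR}]=\beta_R\cdot\bigl(1-[P|_T]\bigr),
\]
using the sequence \(0\to P\to R\to R/P\to0\). Likewise \(j_{Q*}(\beta|_{K_O})=\beta_R(1-[\mathcal L^{-1}])\), using \(Q|_T\simeq\mathcal L^{-1}\) from \cref{prop:geometry}. Therefore
\[
g^*a=\beta_R\bigl([\mathcal L^{-1}]-[\mathcal L]\bigr).
\]
Write \(\ell:=[\mathcal L]-1\). Then \([\mathcal L^{-1}]-1\) equals \(-\ell+\ell^2-\cdots\), and \(\ell^2\) lies in \(F^2\) of \(K_0\). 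Products \(\beta\cdot F^2\) land in \(F^3_3\), and \(\pi_2F^3_3\) maps to zero in \(\pi_2\gr^2\). So I would run the argument of \cref{lem:bott-picard} with leading term \(-2\,\zeta\smile c_1(\mathcal L)\). Alternatively, on a Dedekind domain one has \([\mathcal L][\mathcal L^{-1}]=1\) and \(\ell^2=0\) in \(K_0(T)\), because \(K_0\) is \(\mathbf Z\oplus\Pic\) with trivial square ideal. Either way, \(g^*a=-2\beta_R\ell=\beta_R\ell\) in \(\mathbf F_3\)-coefficients, and \cref{lem:bott-picard} together with \cref{prop:geometry} shows this is nonzero. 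Hence \(a\ne0\).

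The main obstacle is justifying the compatibilities in the last step: the base-change formula for \(G\)-theory pushforwards, and the projection formula with mod-\(3\) coefficients along \(\Spec(K_O)\to T\). The latter requires identifying \(\beta|_{K_O}\) as the restriction of \(\beta_R\) along the closed immersion, which holds because \(O\to R\to R/PR\) is the inclusion \(O\subset K_O\).
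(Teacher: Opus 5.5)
Your proof is correct. The generic vanishing and the nonvanishing follow the paper's route: the localization sequence for $i_*$, flat base change to $R$, the splitting $R/yR\simeq K_O\times K_O$ with $\sigma^*\beta=-\beta$, the projection formula, and then \cref{lem:bott-picard}. The $K_0$ bookkeeping differs only cosmetically. The paper reads the rank-zero class through rank and determinant and uses $\mathcal L^{-2}\simeq\mathcal L$. You use that the rank-zero ideal of $K_0$ of a Dedekind domain squares to zero, together with $-2=1$ on an $\mathbf F_3$-module. Both give $g^*(a)=\beta|_T([\mathcal L]-1)$.

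The Bockstein step genuinely differs from the paper. The paper observes $\partial_A(a)\in K_1(A)[3]=A^\times[3]$ and shows this group is trivial: $B\otimes_AF=F[\zeta]$ is a quadratic field, so $\zeta\notin F$. You instead use naturality of the Bockstein under $i_*$ to get $\partial_A(a)=i_*[\zeta]$. You then kill this class by lifting $\zeta$ to the unit $(-1-x)/2$ of $A$ acting on the resolution $0\to A\xrightarrow{y}A\to A/yA\to0$, and apply additivity. Your argument does not depend on whether $A$ contains cube roots of unity. The paper's argument yields the extra structural fact $K_1(A)[3]=0$, which it reuses in \cref{rem:derived} to identify $K_2(A;\mathbf Z/3)$ with $K_2(A)/3$.
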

\begin{proof}
	Since $y$ is a nonzerodivisor, Quillen's localization sequence
	\[
	G(A/yA;\mathbf Z/3)\xrightarrow{\ i_*\ }
	G(A;\mathbf Z/3)\longrightarrow
	G(A[1/y];\mathbf Z/3)
	\]
	is exact \cite[\S7]{Quillen73}, so $a$ dies in $G_2(A[1/y];\mathbf Z/3)$.
	As $F$ is a localization of $A[1/y]$, and as $A$ and $F$ are regular so
	that the resolution theorem identifies the $G$-groups occurring here with
	their $K$-groups \cite[Thm.~V.3.3]{Weibel13KBook}, the image of $a$ in
	$K_2(F;\mathbf Z/3)$ is zero.
	
	We prove that $a\ne0$.  Let $g\colon A\to R$ be as in
	\cref{lem:split-fibre}.  For a finite $A/yA$-module $M$ one has
	$M\otimes_AR=M\otimes_{A/yA}(R/yR)$, and both functors are exact because
	$R$ is flat over $A$; with \cref{lem:split-fibre} this gives
	\[
	g^*i_*=i_{P*}\varphi_P^*+i_{Q*}\varphi_Q^* ,
	\]
	where \(i_P,i_Q\) are the closed immersions of the two points of \(T\)
	cut out by \(P\) and \(Q\),	while
	\(\varphi_P\colon O\hookrightarrow K_O\) is the standard inclusion and
	\(\varphi_Q:=\varphi_P\circ\sigma\); these are the two components of
	\(A/yA\simeq O\to K_O\times K_O\).

	Writing $\beta|_T$ and
	$\beta|_{K_O}$ for the images of $\beta$ along $O\subset B\to R$ and
	$O\subset K_O$, and using $\sigma^*\beta=-\beta$, we obtain
	\(
	g^*(a)=i_{P*}\bigl(\beta|_{K_O}\bigr)-i_{Q*}\bigl(\beta|_{K_O}\bigr).
	\)
	Both identifications $B/P\simeq O\simeq B/Q$ restrict to the identity on
	$O$, so $i_P^*(\beta|_T)=i_Q^*(\beta|_T)=\beta|_{K_O}$, and the
	projection formula gives
	$g^*(a)=\beta|_T\cdot\bigl([R/PR]-[R/QR]\bigr)$.

	Put \(\mathcal L:=P|_T\).  The exact sequences
	\(0\to\mathcal L\to R\to R/PR\to0\) and
	\(0\to Q|_T\to R\to R/QR\to0\) give
	\(
	[R/PR]-[R/QR]=[Q|_T]-[\mathcal L]
	\)
	in
	\(
	K_0(T).
	\)
	For a connected one-dimensional regular noetherian ring, rank and
	determinant induce an isomorphism
	\(K_0(T)\simeq\mathbf Z\oplus\Pic(T)\)
	\cite[Cor.~II.2.6.3]{Weibel13KBook}, under which this difference
	corresponds to
	\((0,Q|_T\otimes\mathcal L^{-1})\).
	By \cref{prop:geometry},
	\(Q|_T\simeq\mathcal L^{-1}\) and
	\(\mathcal L^{\otimes3}\simeq\mathcal O_T\), so
	\(Q|_T\otimes\mathcal L^{-1}\simeq
	\mathcal L^{-2}\simeq\mathcal L\)
	and therefore
	\(
	g^*(a)=\beta|_T\bigl([\mathcal L]-1\bigr).
	\)
	
	We check the hypotheses of \cref{lem:bott-picard} for $T$.  By
	\cref{prop:geometry} it is integral, regular, noetherian and
	one-dimensional, and the class of $\mathcal L$ generates $\Pic(T)\simeq\mathbf Z/3$, hence is nonzero in $\Pic(T)/3$. The ring
	\(\Gamma(T,\mathcal O_T)\) contains \(\zeta\), and \(3\) is invertible
	on \(T\). If \(K_O=\Frac(O)\), then \(R\) is a
	regular noetherian $K_O$-algebra; the field $K_O$ is perfect because it
	has characteristic zero, so $R$ is geometrically regular over $K_O$
	\cite[Tags~05DU and~038V]{StacksProject}, and Popescu's theorem writes
	$R$ as a filtered colimit of smooth $K_O$-algebras
	\cite[Tag~07GC]{StacksProject}.  Finally
	$\partial_T(\beta|_T)=[\zeta]$ by naturality of the coefficient boundary.
	\cref{lem:bott-picard} therefore gives $g^*(a)\ne0$, whence $a\ne0$.

	It remains to compute the Bockstein.  Exactness of
	\eqref{eq:bockstein-sequence} shows that $\partial_A(a)\in K_1(A)[3]$,
	and determinant identifies \(K_1(A)\) with \(A^\times\)
	\cite[Lem.~III.1.4]{Weibel13KBook}.  Now
	\(
	B\otimes_A F=F\otimes_V O=F[\zeta]
	\)
	is a domain, being a localization of \(B\), and has \(F\)-dimension two
	because \(B\) is finite free of rank two over \(A\)
	(\cref{prop:geometry}).  A finite-dimensional domain over a field is a
	field, so \(F[\zeta]\) is quadratic over \(F\) and \(\zeta\notin F\).
	Also \(\zeta^2\notin F\), since otherwise
	\(\zeta=(\zeta^2)^2\) would belong to \(F\).  Thus \(F\), and hence
	\(A\), contains no nontrivial cube root of unity.  Therefore
	\(K_1(A)[3]=A^\times[3]=0\) and \(\partial_A(a)=0\).

\end{proof}

\begin{proof}[Proof of \cref{thm:main}]
The first paragraph of the proof of \cref{prop:geometry} shows that
\(A\) is a complete two-dimensional ramified regular local domain.
The required class is the class \(a\) constructed in
\cref{prop:class}.
\end{proof}

\section{The saturation shadow and possible odd-prime variants}

The vanishing of the Bockstein clarifies exactly what fails integrally.

\begin{corollary}\label{cor:saturation}
There is $c\in K_2(A)$ lifting $a$ and an element $d\in K_2(F)$ such that
$c|_F=3d$, but $d$ is not in the image of $K_2(A)$.  In particular, the
image of $K_2(A)\to K_2(F)$ is not $3$-saturated.
\end{corollary}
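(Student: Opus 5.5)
Since \(\partial_A(a)=0\) by \cref{prop:class}, exactness of the coefficient sequence \eqref{eq:bockstein-sequence} for \(A\) provides \(c\in K_2(A)\) whose reduction modulo \(3\) is \(a\). Write \(r_A\colon K_2(A)\to K_2(A;\mathbf Z/3)\) and \(r_F\) for the reduction maps. By naturality of \eqref{eq:bockstein-sequence} along \(A\to F\), we get
\[
r_F(c|_F)=a|_F=0
\]
by \cref{prop:class}. Exactness of the coefficient sequence for \(F\) at \(K_2(F)\), namely \(K_2(F)\xrightarrow{3}K_2(F)\to K_2(F;\mathbf Z/3)\), then gives \(d\in K_2(F)\) with \(c|_F=3d\).

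Next we show that \(d\) is not in the image of \(K_2(A)\). Suppose \(d=e|_F\) for some \(e\in K_2(A)\). Then \((c-3e)|_F=c|_F-3d=0\). The abstract records that \(K_2(A)\to K_2(F)\) is injective, so \(c=3e\). Hence
\[
a=r_A(c)=r_A(3e)=0,
\]
contradicting \(a\neq 0\) from \cref{prop:class}. It follows that \(c|_F\) lies in the image and in \(3K_2(F)\), but is not \(3\) times an element of the image. This is exactly the failure of \(3\)-saturation.

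The main obstacle is the integral injectivity of \(K_2(A)\to K_2(F)\). The statement preceding this corollary does not prove it; the abstract only announces it. If I must supply it, I would aim to avoid full injectivity and use only the weaker fact that \(\ker(K_2(A)\to K_2(F))\subseteq 3K_2(A)\), which already suffices. First I would try the known integral Gersten injectivity for \(K_2\) of two-dimensional regular local rings, via the Dennis–Stein/Milnor description \(K_2(A)=K_2^M(A)\) for local rings with large residue field, together with the Gersten-type injectivity of \(K_2^M\) of regular local rings into the fraction field; the latter is known in dimension two, since a regular local ring of dimension two is a UFD and the tame symbols along height-one primes control the kernel. Alternatively, I would cite the remark in the continuation of the paper where injectivity is established. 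If the residue field is small, I would pass to a large unramified extension of \(V\). Then I would use a norm argument: the extension has prime-to-\(3\) degree, or is pro-\(\ell\) for \(\ell\neq 3\), so the norm shows the \(3\)-primary part of the kernel injects. This is all that is needed, since the element \(c-3e\) only needs to lie in \(3K_2(A)\) for the contradiction.
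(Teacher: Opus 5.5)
Your argument is correct and is the paper's proof: lift \(a\) to \(c\) using \(\partial_A(a)=0\), write \(c|_F=3d\) using \(a|_F=0\), and from \(d=e|_F\) deduce \(a=0\) via injectivity of \(K_2(A)\to K_2(F)\). The paper obtains that injectivity simply by citing van der Kallen's theorem for two-dimensional regular local rings, so your fallback sketch is not needed (and the UFD/tame-symbol remark in it would not by itself prove injectivity).
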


\begin{proof}
Since $\partial_A(a)=0$, exactness of \eqref{eq:bockstein-sequence}
gives an integral lift $c$.  Since $a|_F=0$, the same sequence over $F$
gives $c|_F=3d$ for some $d\in K_2(F)$.  Van der Kallen proved that
$K_2(A)\to K_2(F)$ is injective
\cite[main theorem]{VanDerKallen76KTwoSurface}.  If $d=e|_F$ for some
$e\in K_2(A)$, then $c-3e$ would vanish over $F$ and hence vanish in
$K_2(A)$; taking the image in $K_2(A;\mathbf Z/3)$ would give $a=0$, a contradiction.
\end{proof}

\begin{remark}\label{rem:derived}
	In this example one has
	\(
	K_1(A)[3]=K_1(F)[3]=0,
	\)
	so the coefficient sequences identify
	\(
	K_2(A;\mathbf Z/3)\simeq K_2(A)/3,
	\) and
	\(
	K_2(F;\mathbf Z/3)\simeq K_2(F)/3.
	\)
	Thus the counterexample is exactly the nonzero \(3\)-torsion in the
	cokernel of the injective map \(K_2(A)\to K_2(F)\) exhibited in
	\cref{cor:saturation}.
	
	In general the coefficient sequence gives
\[
 0\longrightarrow K_n(R)/3\longrightarrow K_n(R;\mathbf Z/3)
 \longrightarrow K_{n-1}(R)[3]\longrightarrow0.
\]
Consequently the termwise finite-coefficient Gersten row mixes a quotient
of the integral row in degree $n$ with torsion from degree $n-1$; it is
not the derived mod-$3$ reduction of one fixed integral Gersten row.
\end{remark}

An analogous construction is expected at every odd prime.  Its concise
implementation requires a multiplication on the Moore spectrum and
repeated products; we record the proposed construction but do not
supply all of its multiplicative bookkeeping.

\begin{remark}
\label{rem:odd-prime}
The same method is expected to yield the following variants.

Let $p$ be odd, and $V=W(\overline{\mathbf F}_p)$. Write $\Delta=\Gal(V[\zeta_p]/V)$, let
$\chi\colon\Gal(V[\zeta_p]/V)\to\mathbf F_p^\times$ be the cyclotomic character, and put
$s=(p-1)/2$.  Choose a uniformizer $t\in V[\zeta_p]^{\ker(\chi^s)}$ such that
$\sigma(t)=\chi^s(\sigma)t$ for any $\sigma\in \Gal(V[\zeta_p]/V)$, and set $d=t^2\in V$. Then
\(
 A_p:=V\llbracket x,y\rrbracket/(x^2-d-y^p)
\)
is a complete two-dimensional ramified regular local domain.  Writing
\(F_p:=\Frac(A_p)\), the expected conclusion is that there exists
\[
 0\ne a_p\in\ker\bigl(K_{p-1}(A_p;\mathbf Z/p)
             \longrightarrow K_{p-1}(F_p;\mathbf Z/p)\bigr).
\]

\end{remark}

\begin{remark}\label{rem:elementary}
	For $p=3$, elementary computations using symbols could be used to bypass \cref{sec:motivic-detection}.
	We have kept the
	motivic argument: it exhibits the class as a weight-two phenomenon, and it
	is the one that should generalize to odd $p$.

\end{remark}

\subsection*{Acknowledgments}

The author is grateful to Frédéric Déglise for introducing him to the Gersten conjecture
in January 2018 and for many subsequent discussions around this problem.

\medskip
\noindent\textbf{Use of AI-assisted tools}.
Large language models were used as interactive tools for expanding proof
sketches, checking consistency, and improving exposition.  All resulting
suggestions were checked by the author, who assumes full responsibility
for the mathematical statements, proofs, and references.

\bibliographystyle{amsalpha}
\bibliography{FELD_finite_coeff_Gersten_injectivity_fails_in_ramified_mixed_char}

\end{document}